\newcommand{\R}{\mathbb{R}}
\newcommand{\p}{\mathbb{P}}
\newcommand{\B}{\mathcal B}
\newcommand{\T}{\mathcal T}
\newcommand{\tmd}{\mathrm{TMD}}
\newcommand{\tns}{\mathrm{TNS}}
\newcommand{\trn}{\mathrm{TRN}}
\newcommand{\bsim}{\underset{\tiny{\mathrm{bar}}}{\sim}}
\newcommand{\csim}{\underset{\tiny{\mathrm{comb}}}{\sim}}
\newcommand{\tsim}{\underset{\tiny{\mathrm{tmd}}}{\sim}}
\newtheorem{theorem}{Theorem}[section]
\newtheorem{lemma}[theorem]{Lemma}
\newtheorem{proposition}[theorem]{Proposition}
\theoremstyle{definition}
\newtheorem{remark}[theorem]{Remark}
\newtheorem{example}[theorem]{Example}
\title{From trees to barcodes and back again: theoretical and statistical perspectives}
\author{Lida Kanari$^{1*}$, Adélie Garin$^2$, Kathryn Hess$^2$}
\date{%
    $^{1}$ Blue Brain Project, École polytechnique fédérale de Lausanne (EPFL), Campus Biotech, 1202 Geneva, Switzerland.\\
    $^{2}$ Laboratory for topology and neuroscience, Brain Mind Institute, École polytechnique fédérale de Lausanne (EPFL), 1015 Lausanne, Switzerland.\\[2ex]%
    *Correspondence: lida.kanari@gmail.com \\
    \today
}
\begin{document}
\maketitle

\abstract{Methods of topological data analysis have been successfully applied in a wide range of fields to provide useful summaries of the structure of complex data sets in terms of topological descriptors, such as persistence diagrams. While there are many powerful techniques for computing topological descriptors, the inverse problem, i.e., recovering the input data from  topological descriptors, has proved to be challenging. In this article we study in detail the Topological Morphology Descriptor (TMD), which assigns a persistence diagram to any tree embedded in Euclidean space, and a sort of stochastic inverse to the TMD, the Topological Neuron Synthesis (TNS) algorithm, gaining both theoretical and computational insights into the relation between the two. We propose a new approach to classify barcodes using symmetric groups, which provides a concrete language to formulate our results. We investigate to what extent the TNS recovers a geometric tree from its TMD and describe the effect of different types of noise on the process of tree generation from persistence diagrams. We prove moreover that the TNS algorithm is stable with respect to specific types of noise.}

\bigskip

\textbf{Keywords:} tree, topological data analysis, persistence barcode, symmetric group, inverse methods

\clearpage

\tableofcontents

\clearpage

\section{Introduction}

Although geometric approaches to analyzing data have been extensively used for many years, the first topological methods for data analysis were developed only recently, e.g., ~\cite{Frosini1997},~\cite{Edelsbrunner2002},~\cite{Robins2002},~\cite{Zomorodian2004}, ~\cite{Verri2004} and~\cite{Carlsson2009}. Topological Data Analysis (TDA) is a fairly new field at the intersection of data science and algebraic topology, the aim of which is to provide robust mathematical, statistical, and algorithmic methods to infer and analyze the topological and geometric structures underlying complex data. These data are often represented as point clouds in Euclidean or metric spaces, though TDA methods have also been generalized to geometric objects and graphs. TDA has proved its utility in a wide range of applications in biology~\cite{Harrington2012},~\cite{Byrne2019},~\cite{Martino2018},~\cite{Gameiro2015}, material science~\cite{Lee2018}, and climate science~\cite{Muszynski2019}, among other fields. Although it is still rapidly evolving, TDA now provides a set of powerful and efficient tools that can be used in combination with or as complements to other data science tools.

One of the most promising applications of TDA is to the study of the brain, where it has served to analyze neuronal morphologies~\cite{tmd}, brain networks~\cite{Reimann2017},~\cite{Sizemore2017}~\cite{tmd}, and brain functionality~\cite{Stolz2017}. Motivated by the desire to objectively classify neuronal morphologies, in a previous publication (Kanari and Hess in~\cite{tmd}) we designed a topological signature for trees, the Topological Morphology Descriptor (TMD), that assigns a \emph{barcode} (i.e., a multi-set of open intervals -- called \emph{bars} -- in the real line) to any \emph{geometric tree} (i.e, any finite binary tree embedded in $\mathbb R^3$). We showed that the TMD algorithm effectively determines the reliability of the clustering of random and neuronal trees. Moreover, using the TMD algorithm, we performed an objective, stable classification of pyramidal cells in the rat neocortex~\cite{objective_classification}, based only on the shape of their dendrites.

A frequent topic of discussion in the context of TDA is how to define an inverse to the process of associating a particular topological descriptor to a dataset, i.e., how to design a practical algorithm to recover the input data from a topological descriptor, such as a barcode. Oudot and Solomon~\cite{Oudot2018} and Curry et al.~\cite{Curry2018} have proposed partial solutions to this problem. The main obstacle that renders this endeavor particularly challenging has proven to be the computational complexity of the  space of inputs considered. To avoid this obstacle, it is reasonable to constrain the input space and search only for an inverse transformation that is relevant in a specific context, for instance to look for solutions only in the space of embedded graphs, as in~\cite{Belton2020}.

In the context of geometric trees, we have designed an algorithm to reverse-engineer the TMD~\cite{tns}, in order to digitally generate artificial neurons, to compensate for the dearth of available biological reconstructions. This algorithm, called Topological Neuron Synthesis (TNS), stochastically generates a geometric tree from a barcode, in a biologically grounded manner. As shown in~\cite{tns}, the synthesized neurons are statistically indistinguishable from the corresponding reconstructed neurons in terms of both their morphological characteristics and the networks they form.

In this article, we further study the properties of this generative algorithm, from mathematical and statistical perspectives. We perform a theoretical and computational analysis of the TMD and TNS algorithms and their mathematical properties, in which symmetric groups play a key role. In particular, we investigate in detail the extent to which the TNS provides an inverse to the TMD.

First, we carefully define our objects of study -- geometric trees, barcodes, and persistence diagrams -- then recall the TMD and TNS algorithms.  We also introduce two distinct classifications of geometric trees: into combinatorial types and into TMD-types.  The symmetric groups play an important role in our classification of trees into TMD-types. These complementary descriptions provide us with a language in which to formulate our results on the relationship between the TMD and the TNS.

In the next section, we introduce tools to describe the set of geometric trees that realize a specific barcode, i.e., whose TMD is equal to that barcode. In particular we establish an explicit formula for the cardinality of this set, which we use to describe how the cardinality changes when a new bar is added to a barcode or two bars of a barcode permuted. Cayley graphs of symmetric groups provide a useful visualization of these effects.

We then study the composite of the TNS and TMD algorithms from a theoretical perspective, to quantify the extent to which the TNS acts as an inverse to the TMD. For a given barcode $B$, we show that, for a reasonable choice of parameter in the TNS, the probability that the bottleneck distance between the barcodes $B$ and $\tmd \circ \tns (B)$ is greater than $\varepsilon$ decreases with $\varepsilon$, thus establishing a form of stability for the TNS. We prove, moreover, that the probability that two bars of a barcode $B$ will be permuted by applying $\tmd \circ \tns$ decreases exponentially with the distance between the terminations of the two bars, which is another form of stability. Together these stability results imply that the TNS is an excellent approximation to a (right) inverse to the TMD.

In the final section we present computational results that illustrate the complex relationship between a barcode and its possible tree-realizations. In particular, we study the distinguishing characteristics of ``biological'' geometric trees, i.e., those that arise from digital reconstructions of neurons, as opposed to arbitrary geometric trees. We also show that both the combinatorial type and the TMD-type of a geometric tree can change significantly when applying the composite $\tns \circ \tmd$, from which it follows that the TNS is not a left inverse to the TMD.

\begin{figure}[H]
    \centering
    \includegraphics[scale=0.45]{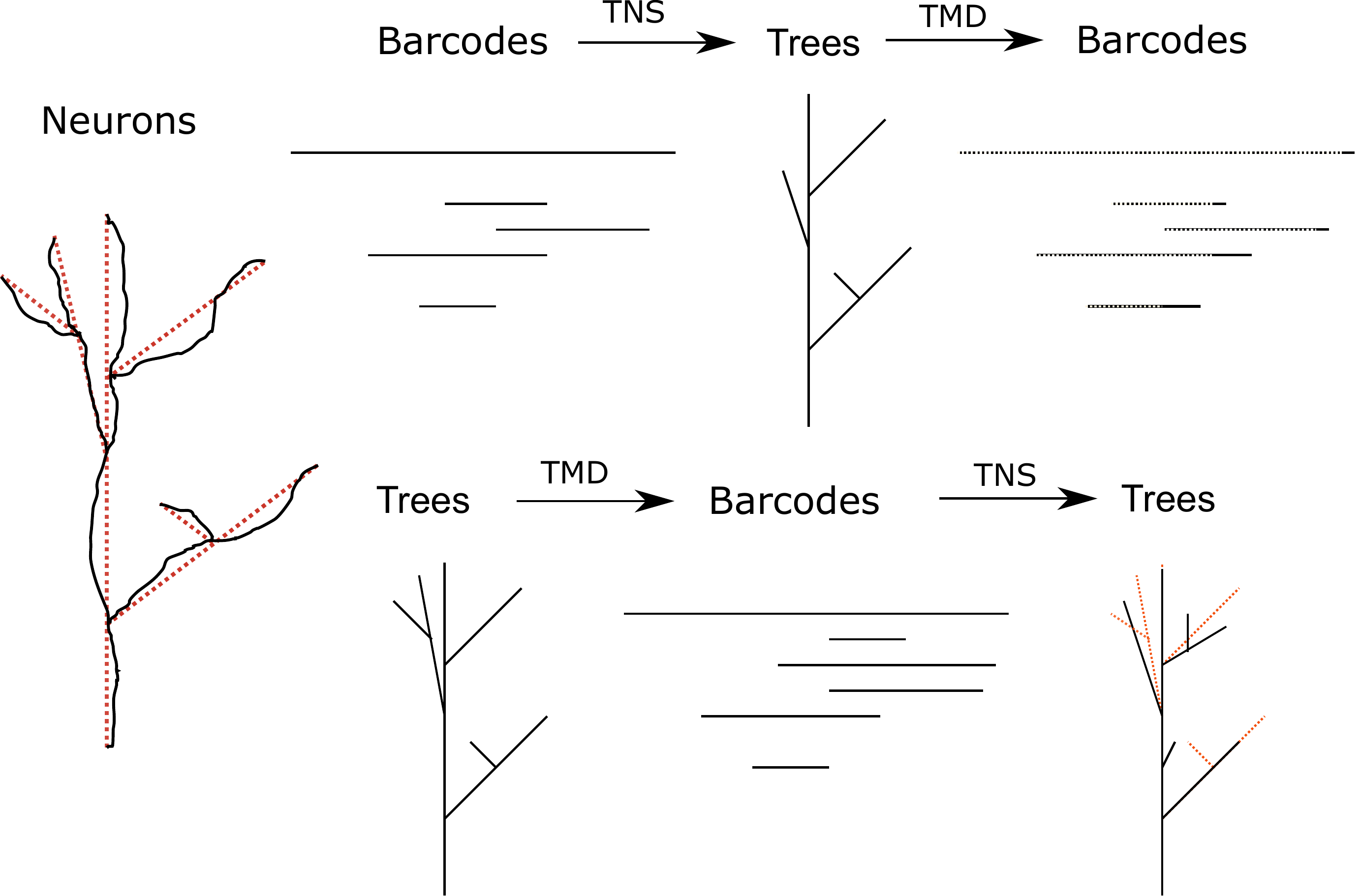}
    \caption{The two composites of TMD and TNS. (Left) An illustration of how a neuron (black) is modeled as a tree (dashed red lines). We describe this process and how to extract a barcode from a tree in section \ref{sec:tmd}. (Top) The composite $\tmd \circ \tns$ applied to a barcode $B$. The new barcode $B' = \tmd \circ \tns (B)$ is indicated in dashed lines on top of the barcode $B$ on the right. We show in section \ref{sec:stability} that the barcodes $B$ and $B'$ will almost certainly be very similar and quantify this similarity.
    (Bottom) The composite $\tns \circ \tmd$ applied to a tree $T$. The tree $T$ that we start with is indicated in dashed red lines under the new tree $T' = \tns \circ \tmd (T)$. The trees $T$ and $T'$ can be quite different combinatorially, as seen on the right.}
    \label{intro_trees_barcodes}
\end{figure}


\section{Mathematical background}

Precisely what a mathematician means by the terms ``tree'' and ``barcode'' can vary depending on context. First, we specify what these terms mean in this article. We then recall biologically motivated algorithms for generating barcodes from trees \cite{tmd} and trees from barcodes \cite{tns}, the relation between which will be made clear in the following sections.

\subsection{Trees}\label{sec:trees}

A \emph{finite rooted tree} $T$ is an acyclic, finite, directed graph such that each vertex is of degree at most 3, with a distinguished vertex $r$ of degree 1, called the \emph{root}. A vertex $v$ of $T$ is a \emph{parent} of a vertex $w$ if there is a directed edge from $w$ to $v$; the vertex $w$ is then a \emph{child} of $v$. Each vertex of $T$ has a single parent, except for the root $r$, which has no parent, and at most two children. The non-root vertices of degree $1$ are called the \emph{leaves} of $T$, and the vertices of degree $3$ the \emph{branch points} of $T$. A finite tree $T$ is fully specified by its set of vertices, equipped with the partial order ``is a parent of''.

Our main objects of study in this article are \emph{geometric trees}, i.e., embeddings of finite rooted trees in $\R^3$, which are often used to model neurons. We assume, moreover, that if a vertex $v$ is the parent of  a vertex $w$, then the distance from the root to $v$ is less than that from the root to $w$.\footnote{Much of the framework that we we develop in this article could be extended to geometric trees equipped with a different distance function that does not necessarily satisfy this condition, at the price of a more involved combinatorial representation. Since our geometric trees of interest -- digitally reconstructed neurons equipped with the path-distance from the root -- do satisfy our extra assumption, we prefer to leave this extension to the interested reader.} Let $\T$ denote the set of geometric trees.  

We say that two geometric trees $T$ and $T'$ are \emph{combinatorially equivalent}, denoted $T\csim T'$, if they are embeddings of the same finite rooted tree. In other words, the combinatorial type of a geometric tree is independent of its embedding in $\R^3$.  

\subsection{Barcodes} \label{sec:barcodes}

A persistence \emph{barcode} is a finite multi-set $B = \{ (b_i,d_i) \}_{i =0,...,n}$ of open intervals, called \emph{bars}, in the real line, $\R$. We call $b_i$ the \emph{birth time} and $d_i$ the \emph{death time} of the $i^{\mathrm{th}}$ interval. For barcodes of geometric trees, generated with the $\tmd$ algorithm (cf.~section \ref{sec:tmd}), the birth time $b_i$ is the distance from the first bifurcation of branch $i$ to the root, while the death time $d_i$ is the distance from the branch termination to the root. Let $\B$ denote the set of all barcodes.

A persistence barcode can equivalently be represented as a multi-set of points in $\R^2$, called a \emph{persistence diagram}, where a bar $(b_i,d_i)$ corresponds to a point in $\R^2$ with $x$-coordinate $d_i$ and $y$-coordinate $b_i$.\footnote{The inversion of the coordinates that we apply here is motivated by the TMD algorithm (cf.~section \ref{sec:tmd}), which decomposes a geometric tree into bars, starting at the leaves and progressing down to the root.} If $B$ is a barcode, we let $\mathrm{PD}(B)$ denote the associated persistence diagram. Note that, under this correspondence, the  points of $\mathrm{PD}(B)$ lie below the diagonal, since $b_i$ is less than $d_i$ for every $i$.

We say that a barcode $B = \{ (b_i,d_i) \}_{i =0,...,n}$ is \emph{strict} if the first bar $(b_0,d_0)$ properly contains all the others, i.e.,  $b_0 < b_i$ and $d_i < d_0$ for all $i$, and no bars are born or die at the same time, i.e., $b_i \neq b_j$ and $d_i \neq d_j$  for all $i \neq j$. The birth times of a strict barcode admit a total ordering.
Without loss of generality, we assume that the bars are ordered by birth value, that is $b_0 < b_1 < \dots < b_n$.  Let $\B^{\mathrm{st}}$ denote the set of strict barcodes, and let $\B^{\mathrm{st}}_n$ denote the subset of those strict barcodes with $n+1$ bars. 

We say that two strict barcodes $B =\{ (b_i,d_i) \}_{i =0,...,n}$ and $B' =\{ (b'_i,d'_i) \}_{i =0,...,n}$ with the same number of bars \emph{are equivalent}, denoted $B \bsim B'$, if their deaths occur in the same order ($d_{i} >d_j\Leftrightarrow d'_{i} > d'_j$), which clearly defines an equivalence relation on $\B^{\mathrm{st}}_n$. There is a bijection from the set of equivalence classes of strict barcodes with $n+1$ bars to the symmetric group $\mathfrak S_n$ on $n$ letters, taking the equivalence class of  a barcode $B =\{ (b_i,d_i) \}_{i =0,...,n}$ to the permutation $\sigma_B: \{1,...,n\} \to \{1,...,n\}$, where $\sigma_B(i)=\# \{ j \mid d_j \leq d_i\}.$

We denote the equivalence class containing a strict barcode $B = \{ (b_i,d_i) \}_{i =0,...,n} $ by $({i_1}...{i_n})$, where $d_{i_k} > d_{i_{k+1}}$ for all $1 \leq  k <  n$. For example, $(2134)$ corresponds to the barcode with $5$ bars shown in Figure~\ref{fig:barcode_2134}.

\begin{figure}[H]
\centering
\includegraphics[scale=0.5]{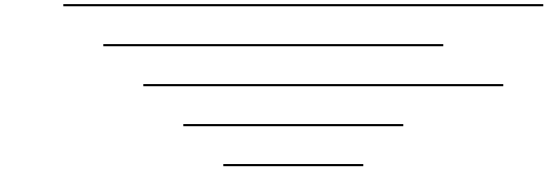} 
\put(-195,60){$b_0$}
\put(5,60){$d_0$}
\put(-180,45){$b_1$}
\put(-35,45){$d_1$}
\put(-165,30){$b_2$}
\put(-10,30){$d_2$}
\put(-150,15){$b_3$}
\put(-50,15){$d_3$}
\put(-135,0){$b_4$}
\put(-65,0){$d_4$}
\label{barcode_2134}
\caption{A strict barcode belonging to the equivalence class $(2134)$. One bar $(b_0,d_0)$ contains all the others. The remaining bars are ordered by their birth times $(b_1 < b_2 < b_3 < b_4)$. Similarly, the deaths are ordered $d_2 > d_1 > d_3 >d_4$, leading to the notation $(2134)$.}
\label{fig:barcode_2134}
\end{figure}

\subsection{The TMD: from trees to barcodes}\label{sec:tmd}

The TMD (Topological Morphology Descriptor) is a many-to-one function from the set of geometric trees to the set of barcodes,
$$\tmd: \T \to \B,$$
that encodes the overall shape of the tree, both the topology of the branching structure of a tree and its embedding in $\R^3$ \cite{tmd}.  It is defined recursively as follows.

Let $T$ be a rooted tree with root $r$ and set $N$ of vertices, with subset $L$ of leaves. Let $\delta: N\to \R_{\geq 0}$ be the function that assigns to each vertex its Euclidean distance to the root $r$.

Intuitively, the output of the TMD algorithm is a barcode,\footnote{ For those used to think in terms of persistent homology, the TMD computes the $0$-dimensional barcode, or persistence diagram, of the distance function $\delta$. Each bar $(b,d)$ corresponds to a connected component in the sublevel sets $\delta ^{-1}\big([0, t)\big)$, that is, a branch of the tree. Note that the birth and death roles are reversed in the TMD algorithm compared to "usual" terminology in persistent homology: the birth corresponds to the bifurcation and the death to the termination of a branch.} where each bar represents a branch of the tree. The endpoints of a bar correspond to the distances to the root from the tip of the branch  and  from the point where the branch bifurcates from another, longer branch, see Figure~\ref{tmd_algo}. 
Table~\ref{table_tmd_tns} summarizes the terminology used for the TMD algorithm.

\begin{figure}[H]
    \centering
    \includegraphics[scale=0.35]{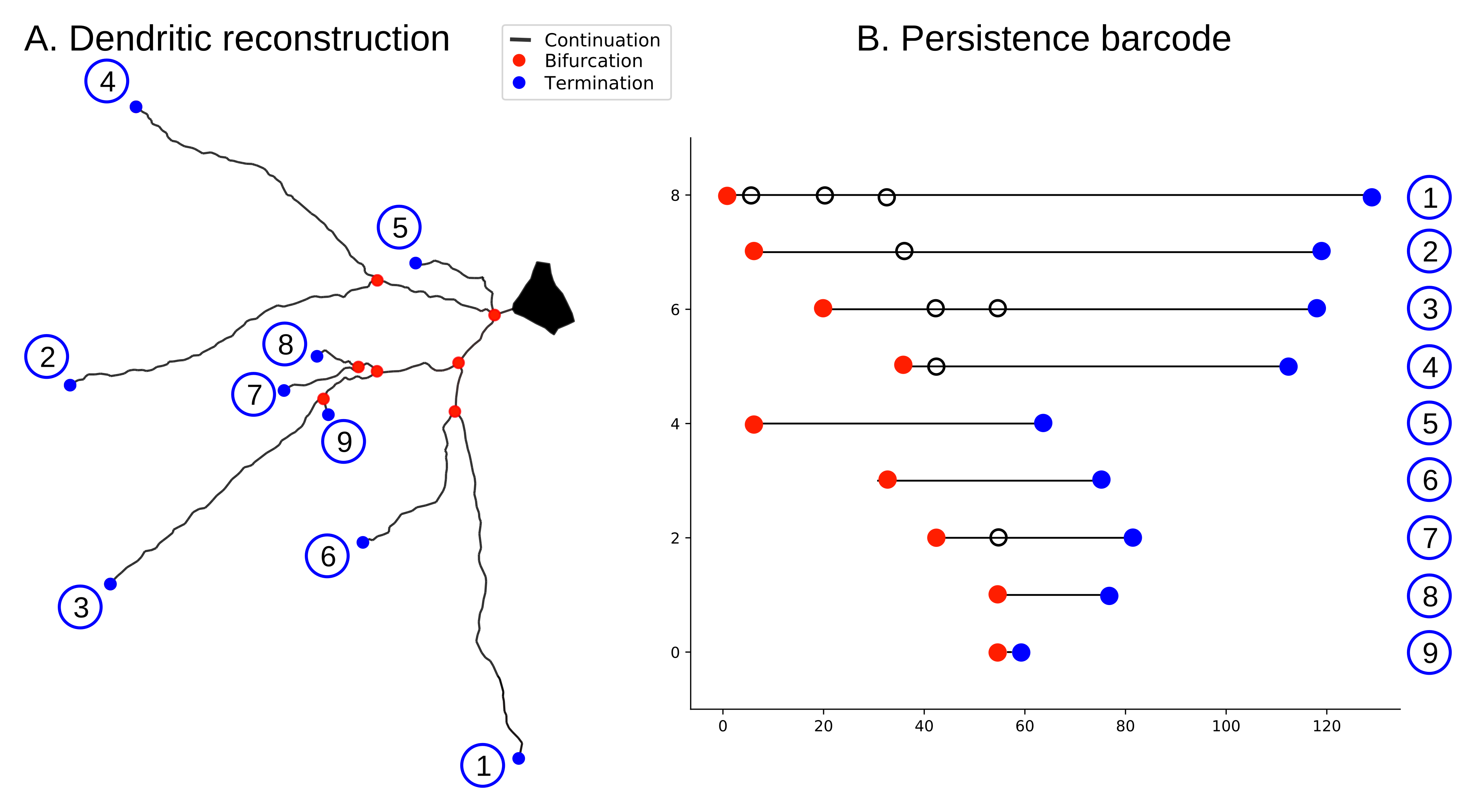}
    \caption{The algorithm to encode a tree structure as a persistence barcode. A. Neuronal tree. B. Persistence barcode generated with TMD. Each branch in the tree (A) corresponds to a bar in the barcode (B); the circled numbers encode the correspondence between branches and bars. Terminations are shown in blue, bifurcations in red, and branches in between in black.}
    \label{tmd_algo}
\end{figure}

For each $v\in N\smallsetminus L$, let $L_v$ denote the set of leaves of the subtree of $T$ with root at the branch point $v$. Let $\mu \colon N \to \mathbb{R}$ be the function defined by 
$$\mu(v) = \begin{cases}\mathrm{max}\big\{ \delta(l) \,|\, l\in L_v \big\}&: v\in N\smallsetminus L,\\ \delta(v) &: v\in L.\end{cases}$$ 
We order the children of any vertex of $T$ by their $\mu$-value: if $v_1, v_2 \in N$ are siblings, then $v_1$ is younger than $v_2$ if $\mu(v_{1})<\mu(v_{2})$.

The algorithm that extracts the TMD of a geometric tree $T$ proceeds as follows (Figure~\ref{tmd_algo}).  Start by creating a set $A$ of \emph{active vertices}, originally set equal to $L$, and an empty barcode. For each leaf $l$, the algorithm proceeds recursively along its unique path to the root $r$. At each branch point $b$, one applies the standard \emph{Elder Rule} from topological data analysis~\cite{curry-2019}, removing from $A$ all of the children of $b$, and adding $b$ to $A$. One bar is added to the barcode for each child of $b$ except (any one of) the longest. Each child removed from $A$ corresponds to a path from some leaf $l$ to $b$, which is recorded in the barcode as a bar $\big(\delta (b), \delta(l)\big)$.
These operations are applied iteratively to all the vertices until the root $r$ is reached, at which point $A$ contains only $r$ and a leaf $l$ for which $\mu$ is maximal among all leaves, which is recorded in the barcode as a bar $\big(0, \delta (l)\big)$.

If $T$ is a digital reconstruction of a neuron, and the function $\delta$ is the path distance from the soma, then $\tmd(T)$ is actually a strict barcode. Indeed, the probability for two branch points or leaves to be exactly the same distance from the soma is almost zero, and $\tmd(T)$ always has a longest bar that contains all the others.  This observation justifies our interest in the subset of strict barcodes.

The TMD gives rise to an equivalence relation on $\T$:  two geometric trees $T$ and $T'$ are \emph{TMD-equivalent}, denoted $T\tsim T'$,  if $\tmd(T) \bsim \tmd(T')$. We provide below an in-depth analysis of the TMD-equivalence classes of geometric trees. Geometric trees can be combinatorially equivalent without being TMD-equivalent and vice-versa, cf. Figure~\ref{treerez}.

\subsection{The TNS: from barcodes to trees}\label{sec:tns}

The topological neuron synthesis (TNS) algorithm \cite{tns} stochastically generates synthetic neurons, in particular for use in digital reconstuctions of brain circuitry \cite{Markram2015}.  In this paper, we focus on the sub-process of the TNS that stochastically generates a geometric tree from a strict barcode, in such a way that if a tree $T$ is generated from a barcode $B$, then $\tmd(T)$ is ``close to'' to $B$, with respect to an appropriate metric on the set of barcodes, up to some stochastic noise, cf. section \ref{sec:stability}.  Henceforth, when we refer to the TNS, we mean this sub-process.

To grow geometric trees, the TNS algorithm first initiates growth, then loops through steps of \emph{elongation} and \emph{branching/termination}. Each branch of the tree is elongated as a directed random walk~\cite{Aslangul1993} with memory. At each step, a growing tip is assigned probabilities to bifurcate, to terminate, or to continue that depend on the path distance from the root and on a chosen bar of the selected barcode. Once a bar has been used, it is removed from the barcode. The growth of a tree terminates when no bars remain to be used.  We now provide further details of the two steps in this process.

\subsubsection*{Bifurcation / Termination} \label{sec:bifurcation}

The branching process in the TNS algorithm is based on the concept of a \emph{Galton-Watson tree}~\cite{Galton1875}, which is a finite rooted tree recursively generated as follows. At each step, a number of offspring is independently sampled from a distribution. Since a geometric tree consists only of bifurcations, terminations, and continuations, the accepted values for the number of offspring are: zero (termination), one (continuation), and two (bifurcation). The Galton-Watson algorithm generates only a combinatorial tree, with no embedding in space, so we modify the traditional process to introduce a dependency of the tree growth on the embedding, so that the bifurcation/termination probabilities depend on the path distance of the growing tip from the root.

The bifurcation/termination step of the growth process of a geometric tree with associated barcode $B$ proceeds as follows. Each growing tip of the tree is assigned a bar $(b_i,d_i)$ sampled from the barcode $B$ and a bifurcation angle $a_i$. The growing tip first checks the probability to bifurcate, then the probability to terminate. If the growing tip does not bifurcate or terminate, then the branch continues to elongate. The probability to bifurcate depends on $b_i$: as the distance from the root to the growing tip approaches $b_i$, the probability to bifurcate increases exponentially until it attains a maximum of $1$ at $b_i$. Similarly, the probability to terminate depends exponentially on $d_i$. 

The probabilities to bifurcate and terminate are sampled from an exponential distribution $e^{- \lambda x}$, whose free parameter $\lambda$ should be wisely chosen. A very steep exponential distribution (high value of $\lambda$) reduces the variance of the population of geometric trees synthesized based on the same barcode. On the other hand, a very low value of $\lambda$ results in trees that are almost random, since the dependence on the input persistence barcode is decreased significantly. If we assume that growth takes place in discrete steps of size $L$, the value of the parameter $\lambda$ should be of the order of the step size $L$, to ensure biologically appropriate variance \cite{tns}. Assuming $L=1$ in some appropriate units, we usually select  $\lambda \approx 1$, so that the bifurcation and termination points are stochastically chosen but still strongly correlated with the input persistence barcodes.

Contrary to other neuron synthesis algorithms~\cite{Koene2009} that sample the branching and termination probabilities from independent distributions, in the TNS the correlation of these probabilities is captured in the structure of the barcode. When the growing tip bifurcates, the corresponding bar is removed from the input barcode to exclude re-sampling of the same conditional probability, thus recording the tree's growth history, which is essential for reproducing the branching structure. In the event of a termination, the growing tip is deactivated, and the bar that corresponds to this termination point is removed from the reference barcode. 

At a bifurcation, the directions of the two daughter branches created depend on the bifurcation angle $a_i$. In this study, we focus primarily on the combinatorial type and the TMD type of the generated geometric tree, so we do not investigate the effect of bifurcation angles on the growth.

\subsubsection*{Elongation} \label{sec:elongation}

We now describe how the synthesized trees are embedded in $\R^3$. A \emph{segment} of a growing tree is the portion of the tree between a pair of consecutive vertices (parent and child). Each synthesized tree is grown segment by segment. The \emph{direction} of a segment, i.e., the vector $\vec d$ from its starting point to its end point, is a weighted sum of three unit vectors: the cumulative \emph{memory} $\vec m$ of the directions of previous segments within a branch, a \emph{target vector} $\vec t$, and a random vector $\vec r$~\cite{Koene2009}. The memory term is a weighted sum of the previous directions of the branch, with the weights decreasing with distance from the tip. As long as the memory function decreases faster than linearly with the distance from the growing tip, the exact choice of function is not important \cite{tns}. The target vector is chosen at the beginning of each branch and depends on the bifurcation angles. The random component is a unit vector sampled uniformly from $\R^3$ at each step.  The direction of the segment 
$$\vec d = \rho \vec r + \tau \vec t + \mu \vec m,$$ 
then depends on three weight parameters $\rho$, $\tau$, and $\mu$, where $\rho + \tau + \mu = 1$.

An increase of the randomness weight $\rho$ results in a highly tortuous branch, approaching the limit of a simple random walk when $\rho=1$. If the targeting weight $\tau = 1$, the branch will be a straight line in the target direction. Different combinations of the three parameters $(\tau, \rho, \mu)$ generate more or less meandering branches and thus reproduce a large diversity of geometric trees.  

\subsubsection*{The Elder Rule and TNS}

The TNS provides a sort of right inverse to the $\tmd$. To recreate a tree that is close to TMD-equivalent to the original, the branch corresponding to a particular bar $(b_i,d_i)$ in the barcode can be attached only to  branches corresponding to bars $(b_j,d_j)$ such that $d_i < d_j$ and $b_i > b_j$. This rule ensures that the Elder rule (at a bifurcation, the longer component survives) holds in the $\tmd$ transformation. As a result, only a subset of trees with $n$ branches can be generated by the TNS from a given strict barcode with $n$ bars.

\begin{table}[H]
\begin{center}
  \scalebox{0.7}{
\begin{tabular}{l|l|l|}
\cline{2-3}
                                              & \textbf{TMD}                                                      & \textbf{TNS}                                                      \\ \hline
\multicolumn{1}{|l|}{\textbf{Goal}}           & Compute the barcode of a tree based on a distance function      & Grow a new tree from a barcode                                    \\ \hline
\multicolumn{1}{|l|}{\textbf{Directionality}} & From leaves to root                                               & From root to leaves                                               \\ \hline
\multicolumn{1}{|l|}{\textbf{Domains}}        & $\{\text{geometric trees} \} \longrightarrow \{\text{barcodes}\}$ & $\{\text{barcodes} \} \longrightarrow \{\text{geometric trees}\}$ \\ \hline

\end{tabular}
} 
\end{center}
\caption{
Summary and terminology of the TMD and TNS algorithms. The TMD computes the barcode of a tree from the tips of branches towards the root, whereas the TNS grows the tree in the opposite direction, from the root to the leaves. }

\label{table_tmd_tns}
\end{table}

\section{Tree-realizations of barcodes}

In this section we provide an in-depth analysis of the set of geometric trees that realize a specific strict barcode $B$, i.e., each of which has TMD equal to $B$.

\subsection{Realizing barcodes as trees}

A geometric tree $T$ is a \emph{tree-realization} of a barcode $B$ if $\tmd(T) = B$, i.e., $T\in \tmd^{-1}(B)$. Examples of tree-realizations are provided in Figure~\ref{notation}B, while Figure~\ref{treerez} shows all the possible combinatorial types of tree-realizations of a strict barcode with $n=4$.  

In Figures \ref{notation} and \ref{treerez}, we encode the combinatorial structure of the tree, i.e., how the branches may be attached to each other, in an adjacency matrix in which the $(i,j)$ coefficient is non-zero if the Elder Rule allows bar $i$ to be connected to bar $j$. For example, in Figure~\ref{notation}A, bars $1-3$ may all be connected to the black bar $0$, thus the coefficients $(0,1), (0,2), (0,3)$ are all non-zero in the corresponding adjacency matrix. Note that in each realization only a subset of these possible attachments is actually made (Figure~\ref{notation}B), since each branch can be attached to only one other branch.

The connectivity diagram (bottom of Figure~\ref{notation}A) provides another representation of the pairs of branches that may be connected, in agreement with the Elder Rule. The arrow on an edge in the diagram indicates the direction of the connection. In this example, there are arrows from $0$ towards $1,2,$ and $3$, from $1$ to $2$ and $3$, and from $2$ to $3$.

\begin{figure}[H]
    \centering
    \includegraphics[scale=1.4]{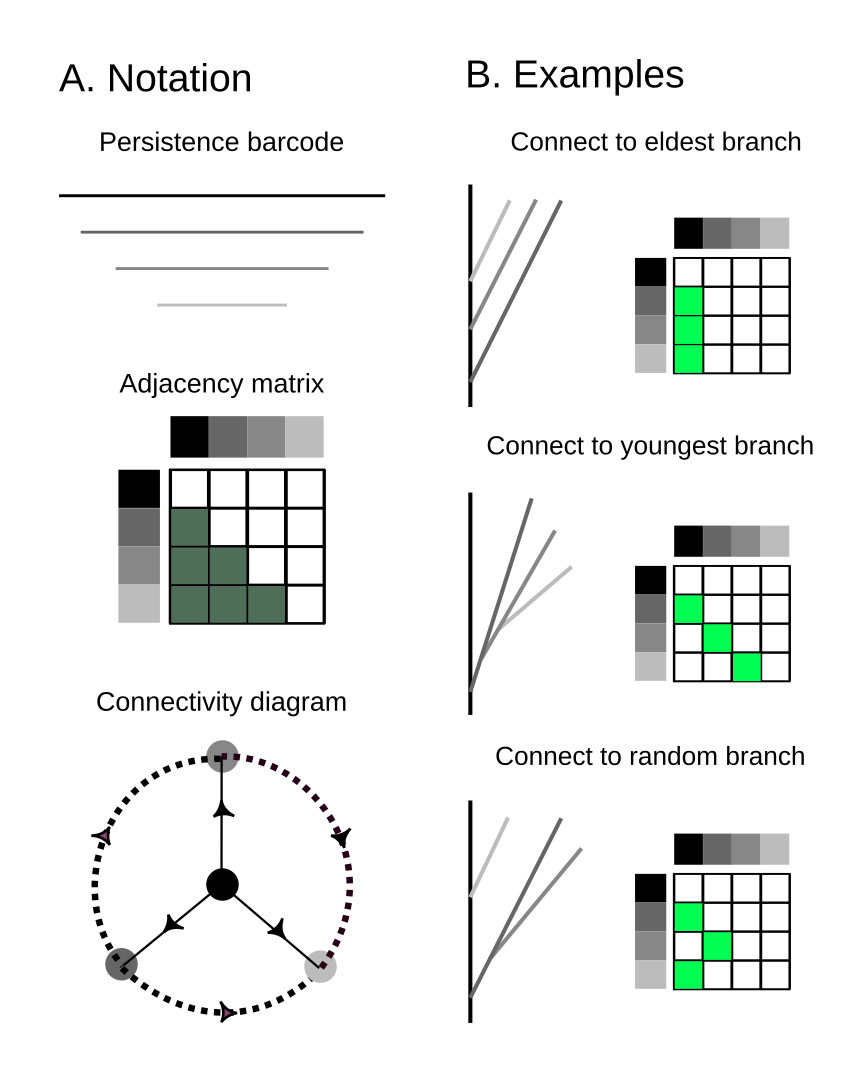}
    \caption{A strict barcode, whose bars are ordered according to birth times (greyscale), defines a unique ordering of death times. This ordering and the Elder Rule constrain the possible combinatorial types of trees that can be realized from this barcode. A. The notation that will be used in this paper from a barcode that corresponds to an adjacency matrix of possible connectivities. Equivalently the possible connectivities are presented in the connectivity diagram. B. Examples of possible tree realizations from brancges that connect to the longest one (top) to random (bottom).}
    \label{notation}
\end{figure}

\begin{figure}[H]
    \centering
    \includegraphics[scale=0.8]{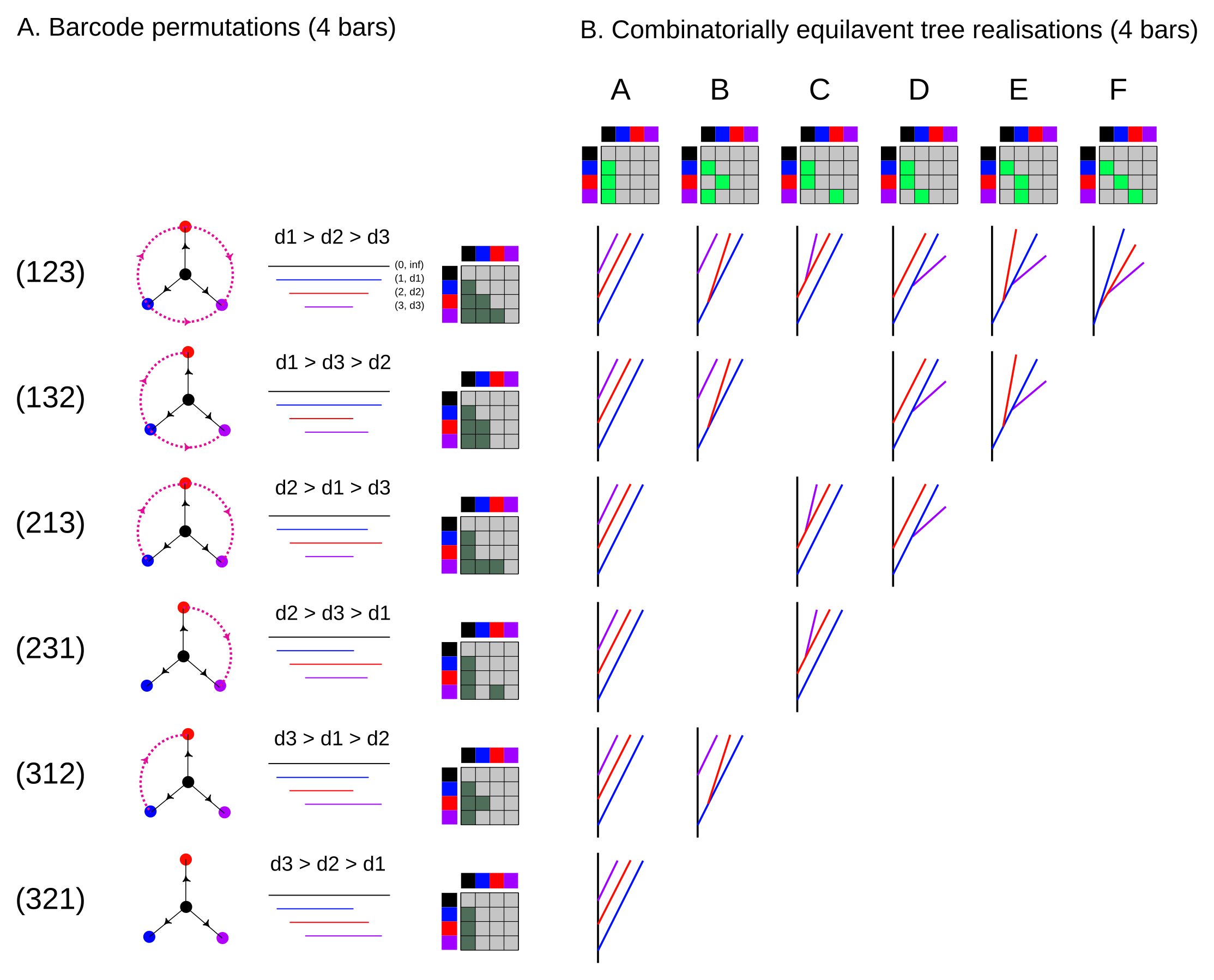}
    \caption{Tree-realizations of  all possible strict barcodes $B$ with four bars. Each row (left) represents a possible permutation of death times for the corresponding order of bars in $B$, i.e., a possible TMD-type. Each barcode can be realized by a subset of all the combinatorial tree types, each represented by a column, with a corresponding adjacency matrix.}
    \label{treerez}
\end{figure}

For any strict barcode $B$, let $\T(B)$ denote the set of combinatorial equivalence classes of tree-realizations of $B$, i.e.,
$$\T(B)= \tmd^{-1}(B)/\csim.$$
We can characterize the equivalence relation on strict barcodes in terms of $\T(B)$.

\begin{lemma}
If $B$ and $B'$ are two strict barcodes with the same number of bars, then
$$ B\bsim B' \;\Longleftrightarrow\;  \T(B) =\T(B').$$
\end{lemma}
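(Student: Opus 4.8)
The plan is to route both implications through the Elder-rule relation that the excerpt attaches to a strict barcode. After normalizing births so that $b_0 < \cdots < b_n$, the set $\T(B)$ of realizable combinatorial types is governed entirely by which branch may be grafted onto which: branch $i$ may attach to branch $j$ exactly when $b_j < b_i$ and $d_i < d_j$. Writing $E(B) = \{(i,j)\mid b_j < b_i,\ d_i < d_j\}$ for this relation, I would first record two reductions. First, $E(B)$ is equivalent data to the permutation $\sigma_B$: with births in standard order, the pairs $(i,j)$ with $j<i$ lying in $E(B)$ are exactly those with $d_i < d_j$, so $E(B)$ determines and is determined by the death order. Second, $B \bsim B'$ holds iff $\sigma_B = \sigma_{B'}$ iff $E(B) = E(B')$. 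With these reductions the lemma becomes the claim that $E(B) = E(B') \Leftrightarrow \T(B) = \T(B')$.

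For the forward implication I would argue that $\T(B)$ depends on $B$ only through $E(B)$. A combinatorial type lies in $\T(B)$ precisely when it admits an attachment pattern (a choice of parent branch for each non-root bar) all of whose edges lie in $E(B)$, and conversely every such $E(B)$-admissible pattern is realizable as a genuine geometric tree whose $\tmd$ is $B$, by embedding each branch $i$ in $\R^3$ with bifurcation at distance $b_i$ and termination at distance $d_i$ on its chosen parent. Hence if $E(B) = E(B')$ the two barcodes admit exactly the same patterns and $\T(B) = \T(B')$; in particular $B \bsim B'$ implies $\T(B) = \T(B')$.

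For the reverse implication I would argue by contraposition, showing that distinct relations $E(B) \ne E(B')$ force $\T(B) \ne \T(B')$ via an explicit distinguishing tree. If $B \not\bsim B'$ then the death orders differ, so there is a pair $k < l$ in the common birth order whose relative death order flips, say $d_k > d_l$ but $d'_k < d'_l$. Then $(l,k) \in E(B) \smallsetminus E(B')$, and I would consider the ``star-plus-one-edge'' pattern in which branch $l$ is grafted onto branch $k$ while every other branch is grafted onto the root branch $0$. Since bar $0$ contains all the others, every edge $m \to 0$ lies in both $E(B)$ and $E(B')$, so admissibility of this pattern is controlled solely by the edge $l \to k$: it is realizable for $B$ but forbidden for $B'$. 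This produces a combinatorial type in $\T(B)$ but not $\T(B')$, whence $\T(B) \ne \T(B')$.

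The routine part is the forward direction, which is essentially bookkeeping once the realizability statement (every $E(B)$-admissible pattern is the $\tmd$-type of an actual realization of $B$) is checked by an explicit embedding. The main obstacle is the reverse direction, and more precisely the point that $\T(B)$ must retain enough information to detect a single admissible attachment $l \to k$. Here the canonical labeling of branches by birth rank---available because a strict barcode has distinct, totally ordered births---is essential: it lets me speak of ``the type in which $l$ attaches to $k$'' as a well-defined element of $\T(B)$, matching the adjacency-matrix encoding of combinatorial types used in Figure~\ref{treerez}. I would therefore take care to run the distinguishing-tree argument at the level of these labeled attachment patterns, since at the level of unlabeled rooted trees the invariant already degenerates in small cases (for $n = 2$ both death orders yield only the $3$-leaf caterpillar), and the separation of the two death orders would be lost.
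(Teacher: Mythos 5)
Your proof is correct, and its core observation---that the Elder-rule attachment relation $E(B)$ is equivalent data to the death order, and that $\T(B)$ consists exactly of the $E(B)$-admissible attachment patterns---is the same one on which the paper's much terser proof rests. You go beyond the paper in two ways worth noting. First, the paper disposes of the direction $\T(B)=\T(B')\Rightarrow B\bsim B'$ by assertion: it says that transposing two deaths (its ``move (1)'') changes the permutation class ``hence also the set of trees,'' exhibiting nothing; your star-plus-one-edge pattern is an explicit witness of a type in $\T(B)\smallsetminus\T(B')$, and it genuinely completes that half of the argument. Second, your closing caveat pinpoints a real issue the paper glosses over: under the paper's literal definition of $\csim$ (embeddings of the same abstract rooted tree, with no labels), the lemma is actually false in small cases---for three bars, both death orders $(12)$ and $(21)$ realize only the single three-leaf caterpillar shape, so $\T(B)=\T(B')$ while $B\not\bsim B'$---and the count $\trn(B)=\prod_i \mathrm{index}_i(B)$ likewise counts labeled attachment patterns rather than unlabeled shapes. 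The lemma, the paper's figures (adjacency matrices), and your argument all live at the level of branches labeled by birth rank, and your insistence on running the distinguishing-tree construction at that level is precisely what makes the reverse implication sound. In short: same route as the paper on the forward direction, but a more careful and complete treatment of the reverse direction and of the equivalence relation itself.
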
 

\begin{proof} The order of the deaths in a strict barcode $B$ completely determines the set of combinatorial equivalence classes of its possible tree realizations.

Indeed, the two pairs of bars in Figure~\ref{move_type}(2) lead to the same adjacency possibilities for their respective branches. Only move (1) in Figure~\ref{move_type}, corresponding to switching the order of the deaths of the two bars, 
modifies the permutation equivalence class of the barcode, hence also the set of trees that return the given barcode.

\begin{figure}[H]
\centering
\includegraphics[scale=0.5]{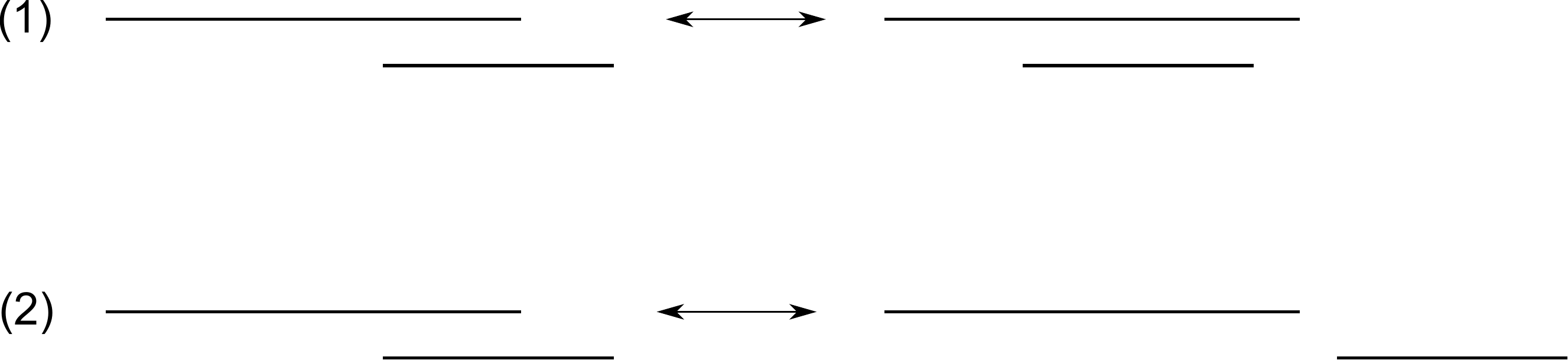} 
\caption{The two possible moves that respect the condition of a realisable barcode. Move $(1)$ modifies the barcode's ordering, whereas move $(2)$ does not change the order of the deaths. }
\label{move_type}
\end{figure}
\end{proof}

\subsection{The combinatorics of tree-realization}\label{sec:combinatorics}

Let $B =\{ (b_i,d_i) \}_{i =0,...,n}$ be a strict barcode. In this section we analyze the \emph{tree-realization number} of $B$, 
$$\trn(B)=\# \T(B),$$
i.e., the number of combinatorial equivalence classes of tree-realizations of $B$.
We provide a formula for $\trn(B)$ in terms of the \emph{index} of each bar $(b_i,d_i)$, i.e.,  the number of bars that include $(b_i,d_i)$ strictly: 
$$\mathrm{index}_i(B)=\# \{j \mid  b_j<b_i<d_i<d_j\}=\#\{j<i\mid d_i < d_j\}.$$
A version of this formula was established by Curry in \cite{curry-2019}.

\begin{lemma}\label{lem:trn} The tree-realization number of a strict barcode $B =\{ (b_i,d_i) \}_{i =0,...,n}$ is equal to the product of the indices of its bars, i.e.,
$$\trn(B)=\prod_{1\leq i\leq n } \mathrm{index}_i(B).$$
\end{lemma}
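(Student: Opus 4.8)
The plan is to realize $\T(B)$ as the set of \emph{valid parent functions} of $B$ and then to count those. For a strict barcode $B=\{(b_i,d_i)\}_{i=0,\dots,n}$ with $b_0<b_1<\cdots<b_n$, call a map $p\colon\{1,\dots,n\}\to\{0,\dots,n\}$ a valid parent function if for every $i$ the bar $(b_{p(i)},d_{p(i)})$ strictly contains $(b_i,d_i)$, i.e. $b_{p(i)}<b_i<d_i<d_{p(i)}$; the idea is that $p(i)$ names the branch onto which the branch of bar $i$ is attached. By definition the number of admissible values of $p(i)$ is exactly $\#\{j\mid b_j<b_i<d_i<d_j\}=\mathrm{index}_i(B)$, so once the bijection is in place the count will be immediate.

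First I would build the forward map. Given a tree-realization $T$, the TMD sets up a bijection between the branches of $T$ and the bars of $B$, and since $B$ is strict the bifurcation distances $b_i$ are pairwise distinct, so this labels the branches canonically by $0,\dots,n$ in birth order. Each branch $i\ge 1$ bifurcates off a unique longer branch, and I set $p_T(i)$ equal to its label. The Elder Rule, enforced by the TMD, compels the surviving (parent) branch at that bifurcation to have strictly larger death and strictly smaller birth, so $p_T$ is valid; moreover $p_T$ depends only on the combinatorial type of $T$.

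Next I would construct the inverse. Given a valid $p$, I would lay down, for each $i$, a branch terminating at distance $d_i$ from the root and attached at distance $b_i$ to the branch of $p(i)$, then embed the result in $\R^3$ compatibly with the distance condition of Section~\ref{sec:trees}. The essential verification is $\tmd(T_p)=B$. The key observation is that along any chain $i,\,p(i),\,p(p(i)),\dots$ the deaths strictly increase, so every descendant of branch $i$ dies before $d_i$; hence the maximal leaf-distance on the subtree rooted at branch $i$ equals $d_i$, i.e. $\mu=d_i$ there. Running the TMD from the leaves, each nested bifurcation is thus resolved so that the longer branch survives and the shorter one is recorded as $(b_i,d_i)$, recovering $B$ exactly. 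Because the birth values are fixed by $B$, the order in which children attach along a branch is forced, so the combinatorial type of $T_p$ is well-defined and $[T_p]$ is inverse to $T\mapsto p_T$.

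It then remains to count valid parent functions: these are independent choices, one per $i$, among the $\mathrm{index}_i(B)$ bars strictly containing bar $i$, and every combination is admissible because $b_{p(i)}<b_i$ makes iteration of $p$ strictly decrease the birth value, so no cycles arise and a genuine rooted tree always results; note too that the binary condition imposes no constraint, as one branch may carry arbitrarily many children at distinct bifurcation points. Multiplying gives $\trn(B)=\prod_{1\le i\le n}\mathrm{index}_i(B)$. I expect the main obstacle to lie not in the arithmetic but in the bijection, and within it in two points: proving that every valid $p$ truly reproduces $B$ (the $\mu$-value/Elder-Rule argument above, which must correctly handle nested bifurcations), and proving that passing from the canonically $B$-labeled branch structure to the unlabeled combinatorial type of Section~\ref{sec:trees} neither merges nor splits classes. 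Strictness of $B$ (distinct births, distinct deaths) is exactly what enables the canonical labeling and the forced child-ordering, so it is the hypothesis I would rely on most.
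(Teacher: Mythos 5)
Your proof is, at its core, the paper's own argument written out in full. The paper's proof is a two-sentence sketch: by the Elder Rule, a branch can be attached only to a branch whose bar strictly contains its own, and the count then follows ``by a straightforward recursion on the number of bars.'' Your valid parent functions are exactly these attachment data (they are the adjacency matrices/connectivity diagrams of Figures~\ref{notation} and~\ref{treerez}), your $\mu$-value argument showing $\tmd(T_p)=B$ for every valid $p$ supplies the verification the paper leaves implicit, and replacing the recursion by a product of independent choices is immaterial. As a count of attachment structures, your argument is correct and complete.

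The obstacle you flag at the end, however, is real, and it resolves in the opposite direction from the one you hope for: with the paper's literal definition of $\csim$ (isomorphism of abstract rooted trees, forgetting the embedding), the passage from bar-labeled attachment structures to combinatorial types \emph{does} merge classes, so $p_T$ is not determined by the combinatorial type of $T$, and $p\mapsto [T_p]$ is not injective. Concretely, take $B=\{(0,10),(1,8),(2,5)\}$, a strict barcode with $\mathrm{index}_1(B)=1$ and $\mathrm{index}_2(B)=2$. The two valid parent functions, $p(2)=0$ and $p(2)=1$, give geometric trees that both realize $B$ (one checks the Elder Rule records $(2,5)$ and then $(1,8)$ in both cases) but are isomorphic as rooted trees: in each, the root has a single child $u$, which has one leaf child and one internal child $w$, which has two leaf children; only the distances to the root distinguish them. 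So under the literal definition one would have $\trn(B)=1$, while the formula gives $1\cdot 2=2$. The lemma is therefore true only when $\T(B)$ is read with the finer, bar-labeled equivalence (same adjacency matrix of branch attachments), which is what the paper actually uses in Figures~\ref{notation} and~\ref{treerez} and what your parent functions formalize. This is a defect of the definitions in Section~\ref{sec:trees} rather than of your counting: under the intended reading your proof is complete, and the ``neither merges nor splits'' verification you anticipated is not needed --- while under the literal reading it is false, so no proof along those lines could have succeeded.
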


\begin{proof} Because of the Elder Rule applied in the TMD, one branch can be attached to another only if its corresponding bar is included in the other bar. This simple observation enables us to prove the lemma by a straightforward recursion on the number of bars.\end{proof}

In particular, the maximum tree-realization number for a strict barcode with $n+1$ bars is $n!$, in the specific case where $d_n<...<d_1 <d_0$. We call this case a strictly ordered barcode.

Note that the tree-realization number does \textbf{not} satisfy $$\trn(B) = \trn(B') \implies B \sim B'$$ in general, i.e., the tree-realization number is not a complete invariant of the barcode equivalence relation. For instance, barcodes $(231)$ and $(312)$ in Figure~\ref{cayley_s3} both have $\trn(B) =2$ but have different permutation types. The inverse clearly does hold, however: $$\trn(B) \neq \trn(B') \implies B \not\sim B',$$ 
enabling us to detect non-equivalence of barcodes.
For the pairs of barcodes studied in sections $4$ and $5$ of this paper, though, it is usually true that if they have same TRN, then they are equivalent, so that we can use the TRN to detect equivalence of barcodes in these special cases. In particular, if a new bar is added to a barcode or two deaths are transposed and no other changes take place, then the tree-realization number does change.

Lemma \ref{lem:trn} enables us to quantify how adding a new bar changes tree-realization number.

\begin{lemma}
Let $B = \{ (b_i,d_i) \}_{i =0,...n}$ and $B'=B\cup \{(b_{n+1},d_{n+1})\}$, where $b_{n+1} > b_i$ for all $0\leq i\leq n$, be strict barcodes.  If $d_{i_1} > ... d_{i_{k-1}} > d_{n+1} > d_{i_{k}} > ... d_{i_n}$, then 
$$\trn(B')=\trn(B)\cdot k.$$
\label{lemma_add_bar}
\end{lemma}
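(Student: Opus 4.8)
The plan is to reduce everything to the product formula of Lemma~\ref{lem:trn} and then to track precisely how the indices of bars change when the new bar is inserted. Writing $\trn(B') = \prod_{1 \le i \le n+1} \mathrm{index}_i(B')$, I would isolate the factor coming from the new bar from those coming from the original bars, and show that the latter are all unchanged while the former equals $k$.

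First I would argue that inserting the new bar leaves every original index untouched, i.e.\ $\mathrm{index}_i(B') = \mathrm{index}_i(B)$ for all $1 \le i \le n$. By definition $\mathrm{index}_i$ counts the bars that strictly contain bar $i$, and the only possible new container is the added bar $(b_{n+1}, d_{n+1})$. But for the new bar to contain bar $i$ strictly we would need $b_{n+1} < b_i$, contradicting the hypothesis $b_{n+1} > b_i$. Hence the new bar strictly contains none of the original bars, and the original indices are preserved.

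Next I would compute the index of the new bar itself. A bar $(b_j, d_j)$ strictly contains $(b_{n+1}, d_{n+1})$ precisely when $b_j < b_{n+1}$ and $d_j > d_{n+1}$. The first condition holds for every original bar, since $b_{n+1}$ is the largest birth time, so $\mathrm{index}_{n+1}(B')$ simply counts the original bars that die after $d_{n+1}$. These are the container bar $(b_0, d_0)$, which contains all the others and in particular the new one, together with exactly the $k-1$ bars $i_1, \dots, i_{k-1}$ singled out by the hypothesis $d_{i_1} > \dots > d_{i_{k-1}} > d_{n+1}$. This gives $\mathrm{index}_{n+1}(B') = (k-1) + 1 = k$. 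Combining this with the previous paragraph and applying Lemma~\ref{lem:trn} to $B'$,
$$\trn(B') = \Big(\prod_{1 \le i \le n} \mathrm{index}_i(B')\Big)\cdot \mathrm{index}_{n+1}(B') = \trn(B)\cdot k.$$

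The argument is short and carries no genuine obstacle; the single point requiring care — which I would flag as the main pitfall rather than a real difficulty — is not to forget the container bar $(b_0, d_0)$ when counting $\mathrm{index}_{n+1}(B')$. The death-ordering notation $(i_1 \dots i_n)$ ranges only over the bars $1, \dots, n$, so the hypothesis accounts for the $k-1$ later deaths among those, and bar $0$ supplies the decisive remaining $+1$ that turns $k-1$ into $k$.
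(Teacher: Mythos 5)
Your proof is correct and follows essentially the same route as the paper's: apply Lemma~\ref{lem:trn} and observe that the new bar has index $k$ while the original indices are unchanged. The paper compresses this into one sentence, leaving implicit both the preservation of the original indices and the role of the containing bar $(b_0,d_0)$ in the count, so your version simply fills in details the authors took for granted.
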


\begin{proof}
The condition on $d_{n+1}$ implies that the new bar $(b_{n+1},d_{n+1})$ is included in exactly $k$ other bars, so its index is $k$.
\end{proof}

\begin{example}
Let $B$ be a barcode with four bars such that $b_0 < b_1 < b_2 <b_3$ and $d_0 > d_2 > d_1 > d_3$, i.e., its equivalence class is $(213)$. It is easy to see that $\trn(B)=3$ (see Figure~\ref{example_add_bar}). If we add a new bar $(b_4,d_4)$ such that $d_1>d_4>d_3$, the equivalence class of the new barcode $B'$  is $(2143)$, and bar $(b_4,d_4)$ is included in $(b_0,d_0)$, $(b_2,d_2)$ and $(b_1,d_1)$, but not in $(b_3,d_3)$ because $d_4>d_3$. Therefore, its index is $3$, whence $\trn (B')=3 \cdot 3 = 9$.

\begin{figure}[H]
    \centering
    \includegraphics[scale = 0.5]{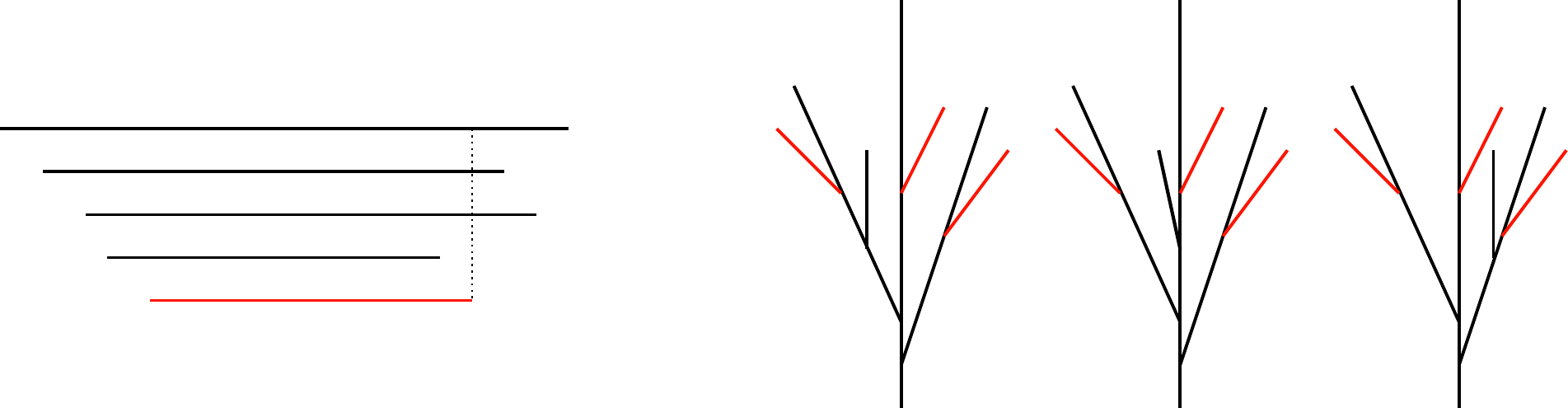}
    \caption{A barcode $B$ in equivalence class $(213)$ is shown in black. There are three possible combinatorial equivalence classes of trees whose TMD barcode is $B$, also represented in black. After adding the extra bar in red, we obtain a new barcode $B'$, in the equivalence class $(2143)$. In a tree-realization of $B'$, the branch corresponding to the  red bar can be attached to any of the branches corresponding to the $0$th, $1$st, and $2$nd bars, represented on the trees by the red branches. This leads to nine possible combinatorial equivalence classes of trees for the barcode $(2134)$.}
    \label{example_add_bar}
\end{figure}
\end{example}

We can also apply Lemma \ref{lem:trn} to determining how switching the order of two consecutive deaths in the barcodes affects the tree realization number. 

\begin{proposition} \label{change_class_real}
Let $B= \{ (b_i,d_i) \}_{i =0,...n}$ be a strict barcode in the equivalence class $(i_1...i_n)$.  Let $B'=\{ (b'_i,d'_i) \}_{i =0,...n}$ be a new barcode obtained by permuting the deaths $d_{i_k}$ and $d_{i_{k+1}}$, i.e., $b_i=b'_i$ for all $i$ and $d_i=d'_i$ for all $i\not= i_k,i_{k+1}$, while $d_{i_k}=d'_{i_{k+1}}$ and $d_{i_{k+1}}=d'_{i_{k}}$. 
\begin{enumerate}
\item If $i_{k} < i_{k+1}$,  then $\mathrm{index}_{i_{k+1}}(B')=\mathrm{index}_{i_{k+1}}(B)-1$, and 
 $$\trn(B')=\frac{\trn(B) (\mathrm{index}_{i_{k+1}}(B)-1) }{\mathrm{index}_{i_{k+1}}(B)}.$$
 
\item If $i_{k} > i_{k+1}$, then $\mathrm{index}_{i_{k+1}}(B')=\mathrm{index}_{i_{k+1}}(B)+1$, and 
 $$\trn(B')=\frac{\trn(B) (\mathrm{index}_{i_{k+1}}(B)+1) }{\mathrm{index}_{i_{k+1}}(B)}.$$
 \end{enumerate}
\end{proposition}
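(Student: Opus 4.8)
The plan is to deduce everything from Lemma~\ref{lem:trn}, which writes $\trn$ as the product $\prod_{1\le i\le n}\mathrm{index}_i$ of the indices of the bars. Passing from $B$ to $B'$ only interchanges the two death values $d_{i_k}$ and $d_{i_{k+1}}$ and leaves every birth untouched, so the proposition reduces entirely to understanding how each factor $\mathrm{index}_m$ is affected. The whole argument therefore hinges on a single claim: the swap changes the index of exactly one bar, and only by $\pm 1$.

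To prove that claim I would exploit the fact that $d_{i_k}$ and $d_{i_{k+1}}$ are \emph{consecutive} in the death order of $B$, so that the open interval between them contains no other death. Recall that $\mathrm{index}_m$ counts the bars strictly containing $m$, i.e.\ those $j$ with $b_j<b_m$ and $d_j>d_m$. Since all births are fixed, the container-status of a pair of bars can flip only if the moving death $d_{i_k}$ or $d_{i_{k+1}}$ crosses the death of the other bar; by consecutiveness this forces that other death to lie in the empty interval between $d_{i_k}$ and $d_{i_{k+1}}$. Hence no bar $m\notin\{i_k,i_{k+1}\}$ changes its index, and each of $\mathrm{index}_{i_k},\mathrm{index}_{i_{k+1}}$ can change only through the containment relation between the two swapped bars themselves, yielding at most a $\pm 1$.

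It then remains to analyse that mutual containment relation, using the birth order (recall $b_i<b_j\iff i<j$) and the fact that $d_{i_k}>d_{i_{k+1}}$ in $B$. If $i_k<i_{k+1}$, then $i_k$ has the smaller birth and the larger death, so it contains $i_{k+1}$ in $B$; after the swap its death drops to $d_{i_{k+1}}$, the containment is destroyed, and no new one appears, so $i_{k+1}$ loses one container and $\mathrm{index}_{i_{k+1}}(B')=\mathrm{index}_{i_{k+1}}(B)-1$. If instead $i_k>i_{k+1}$, the two bars cross in $B$ and neither contains the other; after the swap the earlier-born bar $i_{k+1}$ keeps its smaller birth but acquires the larger death $d_{i_k}$, so it comes to contain the later-born bar, whose index consequently rises by $1$. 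Feeding the one modified factor back into the product of Lemma~\ref{lem:trn} gives the multiplicative corrections $(\mathrm{index}-1)/\mathrm{index}$ and $(\mathrm{index}+1)/\mathrm{index}$ asserted in the two cases.

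The only genuinely delicate step, and the one I expect to be the main obstacle, is the bookkeeping just described: verifying that no third bar's index is perturbed --- precisely where consecutiveness of the two deaths is indispensable --- and then correctly matching the single changed index and its sign to the labels $i_k,i_{k+1}$. In the case $i_k<i_{k+1}$ it is the later-born bar $i_{k+1}$ whose index drops; in the case $i_k>i_{k+1}$ the later-born bar is $i_k$, and it is this index that gains a container, so one must take care not to misattribute the change to $i_{k+1}$. I would check the attribution on a small strictly ordered barcode with four bars before trusting the general manipulation, since that is exactly where an index or a sign is easiest to misplace.
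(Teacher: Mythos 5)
Your argument is correct and follows essentially the same strategy as the paper's proof: reduce everything to the product formula of Lemma~\ref{lem:trn}, then track how swapping two consecutive deaths affects each factor, using consecutiveness of the deaths to show that the only containment relation that can change is the mutual one between the two swapped bars. The paper treats case (1) exactly as you do and disposes of case (2) by exchanging the roles of $B$ and $B'$, whereas you argue case (2) directly; the paper also leaves the consecutiveness argument implicit (``otherwise respects all of the same inclusion relations''), which you rightly identify as the crux and spell out.

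The place where you diverge from the paper is not a flaw in your reasoning but a slip in the printed statement: in case (2) you conclude that it is the \emph{later-born} bar $i_k$ whose index increases, $\mathrm{index}_{i_k}(B')=\mathrm{index}_{i_k}(B)+1$, while the proposition as stated attributes the increase to $i_{k+1}$. You are right. Concretely, take $B$ with bars $(0,10),(1,7),(2,9),(3,8)$, which lies in the class $(231)$, and let $k=2$, so that $i_k=3>1=i_{k+1}$ and the swap exchanges $d_3$ and $d_1$. Then $\mathrm{index}_1$ equals $1$ in both $B$ and $B'$, while $\mathrm{index}_3$ goes from $2$ to $3$; accordingly $\trn(B)=2$ and $\trn(B')=3$, whereas the printed formula would give $\trn(B')=2\cdot(1+1)/1=4$. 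Note also that the paper's own symmetry argument, traced through carefully, yields your version: $B'$ has $i_{k+1}$ and $i_k$ in positions $k$ and $k+1$ of its death order, now in increasing birth order, so case (1) applied to $B'$ gives $\mathrm{index}_{i_k}(B)=\mathrm{index}_{i_k}(B')-1$, i.e., exactly your conclusion. So your closing caution about ``misattributing the change to $i_{k+1}$'' pinpoints a genuine error in the stated proposition, and your proof establishes the corrected statement.
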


\begin{proof} It is enough to prove (1), since (2) then follows by switching the roles of $B$ and $B'$. 

If $i_{k} < i_{k+1}$, then $b_{i_{k}} < b_{i_{k+1}}$.  Since $B$ is in the equivalence class $(i_1...i_n)$, $d_{i_{k+1}} < d_{i_k} $ as well, whence $(b_{i_{k+1}}, d_{i_{k+1}})\subset(b_{i_k},d_{i_k})$. On the other hand,  $(b'_{i_{k+1}}, d'_{i_{k+1}})\not\subset(b'_{i_k},d'_{i_k})$, but otherwise respects all of the same inclusion relations as $(b_{i_{k+1}}, d_{i_{k+1}})$, so that 
$$\mathrm{index}_{i_{k+1}}(B')=\mathrm{index}_{i_{k+1}}(B)-1,$$
as desired. Moreover, $(b'_{i_{k}}, d'_{i_{k}})\not\subset(b'_{i_{k+1}},d'_{i_{k+1}})$, so $(b'_{i_{k}}, d'_{i_{k}})$ respects exactly the same inclusion relations as $(b_{i_{k}}, d_{i_{k}})$, i.e., 
$$\mathrm{index}_{i_{k}}(B')=\mathrm{index}_{i_{k}}(B).$$
Because no other bars are affected when passing from $B$ to $B'$, we can conclude. 
\end{proof}

\begin{example}
 In Figure~\ref{cayley}, we show all the possible death-transpositions in a strict barcode with five bars. As an example, take $B$ in the equivalence class $(2134)$, so the barcode satisfies $d_2 > d_1 >d_3 >d_4$. The index of $(b_4,d_4)$ is $4$, because it is included in all the other bars. Permuting $d_3$ and $d_4$ leads to a barcode in the equivalence class $(2143)$. The index of the last bar is now $3$ because it is no longer included in the third bar. 
\end{example}

 Recall the bijection $\sigma: \B^{\mathrm{st}}_n \to \mathfrak S_n$ from section \ref{sec:barcodes}. Permuting the order of the deaths $d_i$ and $d_{i+1}$ corresponds to a transposition $(i, i+1)$ in $\mathfrak S_n$ (and to move $(1)$ in Figure~\ref{move_type}). Studying the allowed moves and their effects on the barcode is equivalent to studying the symmetric group seen as generated by transpositions of type $(i, i+1)$, enabling us to create the following revealing visualization of the effect of switching the order of deaths or of adding a new bar by using Cayley graphs of $\mathfrak S_n$.
 
We illustrate the results of Lemma \ref{lemma_add_bar} and Proposition \ref{change_class_real} on the two Cayley graphs of $\mathfrak
 S_3$ and $\mathfrak S_4$ in Figures \ref{cayley_s3} and \ref{cayley}.
 Figure~\ref{cayley_s3} shows the Cayley graph of $\mathfrak S_3$ generated by the permutations $(12)$ and $(23)$ and the corresponding equivalence classes of  barcodes. The vertices of the graph correspond  to the permutations in the symmetric group and their corresponding barcode types, and the edges between them to the transpositions transforming one permutation into another. The number next to each bar is its index. The trees that return a given barcode are sketched next to each vertex of the Cayley graph of $\mathfrak S_3$.
 
 \begin{figure}[H]\label{fig:cayley-s3}
    \centering
    \includegraphics[scale = 0.4]{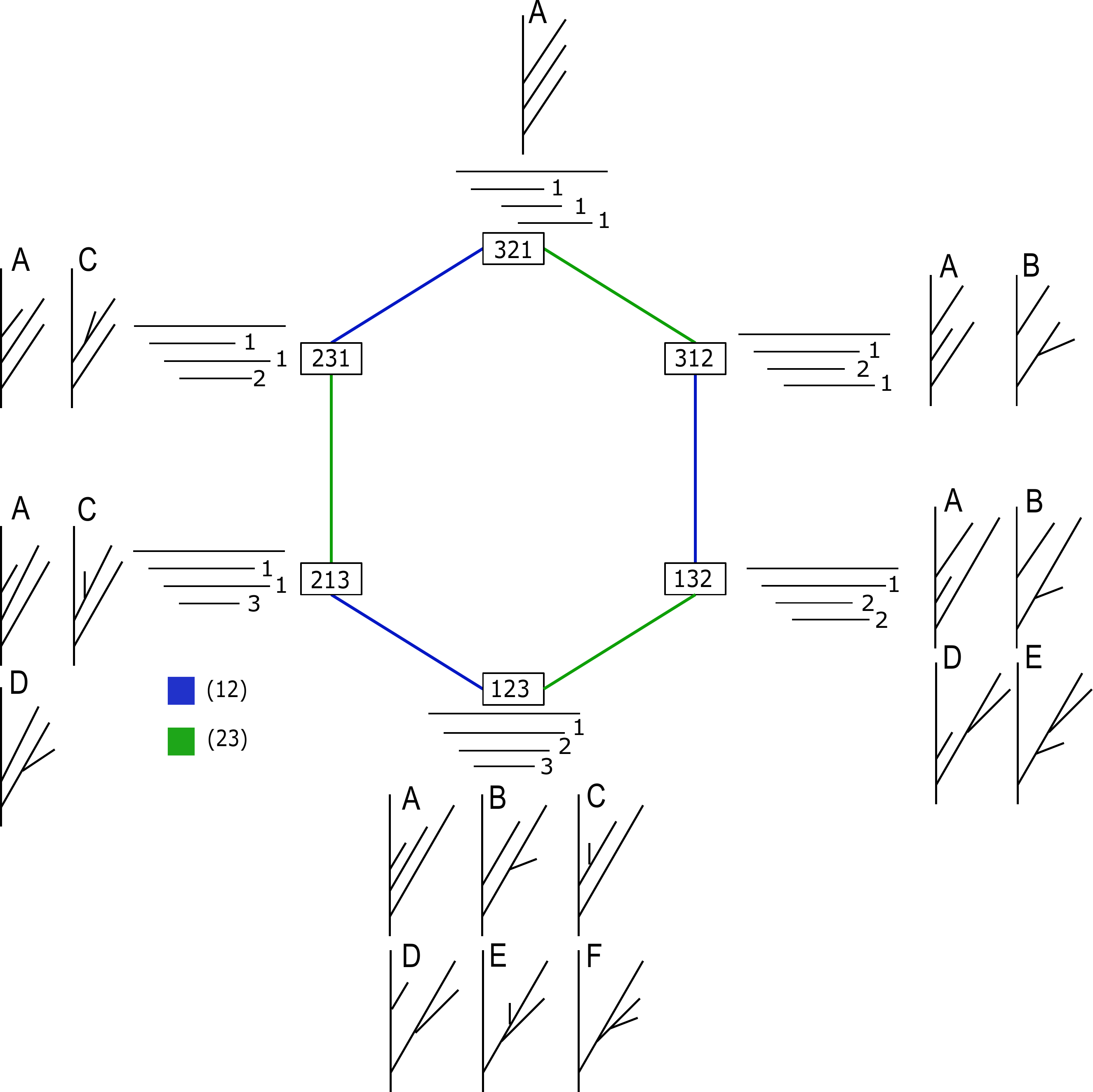}
    \caption{A representation of the space of barcodes with four bars up to permutation equivalence class} as the Cayley graph of $\mathfrak S_3$ generated by the transpositions $(12)$ and $(23)$, respectively. Each vertex is an element of the group. The edges represent the transposition to convert one end point into the other, colored by generator. The number to the right of each bar is its index. All trees $T$ such that $\tmd(T)=B$ are indicated next to a barcode $B$ with the corresponding tree type of Figure~\ref{treerez}. The number of such trees can be computed using Lemma \ref{lem:trn}.
    \label{cayley_s3}
\end{figure}

Figure~\ref{cayley} shows the Cayley graph of $\mathfrak S_4$ generated by the transpositions $(12)$, $(23)$, and $(34)$, illustrating the effect on tree-realization number both of switching the order of two deaths and, in comparison with the previous figure, of adding an extra bar.

\begin{figure}[H]\label{fig:cayley-s4}
\centering
\includegraphics[scale=0.6]{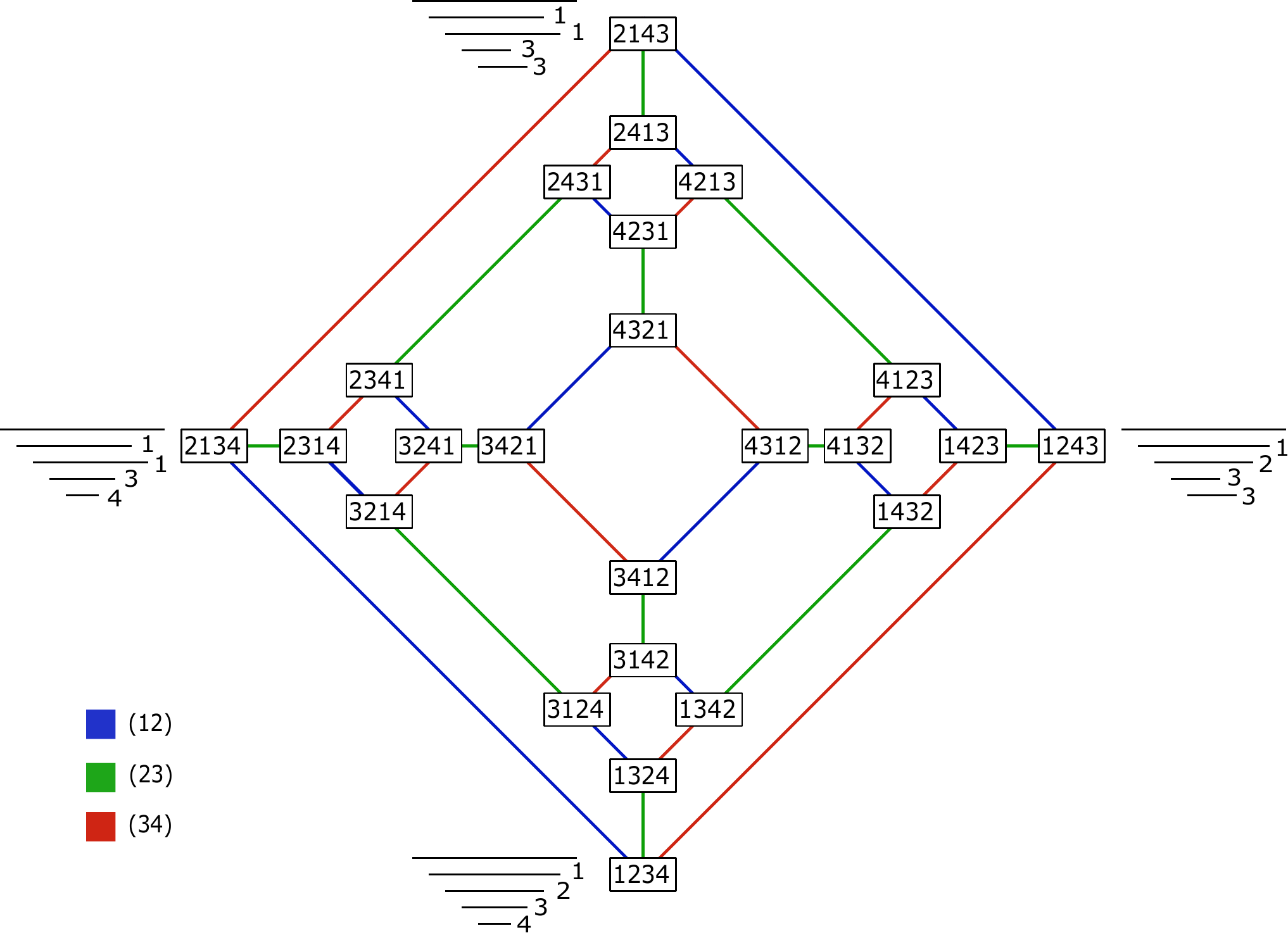} 
\caption{The Cayley graph representing $\mathfrak S_4$ generated by $(12),(23),(34)$. For the four outside elements, we provide the barcode associated to the permutation. The number next to each bar is its index.}
\label{cayley}
\end{figure}

\section{Stability of the TNS}\label{sec:stability}

In this section, we investigate the effect of the composition of the TNS and TMD algorithms from a theoretical perspective. Given a strict barcode $B =  \{(b_i,d_i)\}_{i=0,...,n}$, we apply the TNS  to $B$, for a fixed choice of the parameter $\lambda$ (cf. section \ref{sec:tns}), obtaining a tree $T_B$, and then compute the barcode of $T_B$, $ \tmd(T_B)$, which we denote by $B' = \{(b_i',d_i')\}_{i=0,...,n}$.  To quantify to what extent the TNS acts as an inverse to the TMD, we are interested in determining how similar $B$ and $B'$ are.

Expressing the similarity between $B$ and $B'$ in terms of the bottleneck distance enables us to establish one form of stability for the TNS in the first part of this section.  We establish another type of stability for the TNS in the second part, when we show that the probability that the order of two specific bars will be altered upon applying $\tmd\circ \tns$ decreases exponentially with the distance between the death times of the two bars.  Together these stability results imply that the TNS is an excellent approximation to a (right) inverse to the TMD.

\subsection{Bottleneck stability}

Henceforth, we call the endpoints of the bars of the barcode $B$ \emph{targets}, as the TNS algorithm either creates a new branch or terminates a branch when the distance from the root approaches a birth or death point, respectively. 

By definition of the TNS algorithm (cf. section \ref{sec:tns}), when approaching a target, there is an exponential probability to bifurcate (create a new branch) or terminate, depending on $\lambda$. It follows that for any bar $(b_i,d_i)$ of a given barcode $B$, the distance between $b_i$ and $b_i'$ (the bifurcation distance and the target bifurcation distance of the $i^{\text{th}}$ branch) and the distance between $d_i$ and $d_i'$ (the termination distance and the target termination distance of the $i^{\text{th}}$ branch) should follow an exponential distribution of parameter $\lambda$,
$$ \vert b_i- b'_{i} \vert\sim\textup{Exp}(\lambda) \text{ and }\vert d_i- d'_{i} \vert\sim\textup{Exp}(\lambda).$$

The notion of similarity between barcodes that we consider here is the standard \emph{bottleneck distance}.  Given two strict barcodes $B$ and $B'$ with $n+1$ bars, the bottleneck distance between them is 
\[ d(B,B') = \inf_{\gamma\in \mathfrak S_n}\sup_i \vert b_i- b'_{\gamma(i)} \vert + \vert d_i - d'_{\gamma(i)} \vert. \] 

\begin{lemma}\label{bottleneck_stability_lemma}
Let $B$ be a strict barcode with $n$ bars, and let $B' = \tmd \circ \tns (B)$. If $B \sim B'$, then
\begin{equation}\label{eq_proba_bottleneck}
\mathbb{P}\big(d(B,B') > \varepsilon \big) \leq 1-(1-\exp(- \lambda \varepsilon)(\lambda \varepsilon +1))^n. \end{equation}
\end{lemma}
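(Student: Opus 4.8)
The plan is to bound the bottleneck distance by a single tractable statistic and then exploit the exponential noise model recorded just before the lemma. First I would use the hypothesis $B\sim B'$ to dispense with the infimum over $\mathfrak S_n$ in the definition of $d(B,B')$. Since the TNS grows one branch per bar of $B$, there is a canonical labelling matching the $i^{\text{th}}$ bar of $B$ to the bar of $B'=\tmd\circ\tns(B)$ produced by that branch; the assumption $B\sim B'$ guarantees that this labelling is consistent with the ordering of both barcodes by birth time, so the identity permutation is an admissible choice in the infimum. Consequently
\[ d(B,B')\;\le\;\sup_i\big(\vert b_i-b_i'\vert+\vert d_i-d_i'\vert\big), \]
and therefore
\[ \p\big(d(B,B')>\varepsilon\big)\;\le\;\p\Big(\sup_i\big(\vert b_i-b_i'\vert+\vert d_i-d_i'\vert\big)>\varepsilon\Big). \]

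Next I would identify the distribution of each summand. By the discussion preceding the lemma, each birth discrepancy $\vert b_i-b_i'\vert$ and each death discrepancy $\vert d_i-d_i'\vert$ follows $\textup{Exp}(\lambda)$, and these are independent, so their sum $Z_i:=\vert b_i-b_i'\vert+\vert d_i-d_i'\vert$ is a sum of two independent $\textup{Exp}(\lambda)$ variables, i.e.\ an Erlang (Gamma) variable of shape $2$ and rate $\lambda$. A direct computation of its cumulative distribution function gives
\[ \p(Z_i\le\varepsilon)=1-\exp(-\lambda\varepsilon)(\lambda\varepsilon+1),\qquad \p(Z_i>\varepsilon)=\exp(-\lambda\varepsilon)(\lambda\varepsilon+1). \]

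Finally I would assemble the tail of the supremum using independence across bars. Because $\sup_i Z_i\le\varepsilon$ exactly when $Z_i\le\varepsilon$ for every $i$, and the branches (hence the $Z_i$) are grown independently,
\[ \p\Big(\sup_i Z_i\le\varepsilon\Big)=\prod_i\p(Z_i\le\varepsilon)=\big(1-\exp(-\lambda\varepsilon)(\lambda\varepsilon+1)\big)^n, \]
the product ranging over the $n$ bars that carry noise. Taking complements and chaining with the first inequality yields exactly \eqref{eq_proba_bottleneck}.

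The steps manipulating distributions and the complement are routine; the genuine content, and the place I expect the main difficulty, lies in the two independence claims that the modelling supplies rather than in the algebra: independence of the birth- and death-noise within a single bar, and independence of the noise across distinct bars. Both are reasonable given that distinct branches of the TNS are elongated by separate random walks and that bifurcation and termination are tested against different targets, but making them precise requires care with how the TNS actually samples. A secondary point to address is the bookkeeping of the index set — in particular the longest bar, whose birth is pinned at the root and so contributes no birth-noise — which is what reconciles the $n+1$ bars of a generic strict barcode with the exponent $n$ appearing in the bound.
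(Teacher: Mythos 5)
Your proposal is correct and follows essentially the same route as the paper's proof: bound the bottleneck distance by taking $\gamma$ to be the identity (justified by $B \sim B'$), identify each $\vert b_i - b_i'\vert + \vert d_i - d_i'\vert$ as Erlang$(2,\lambda)$, use independence across bars to compute the tail of the supremum, and take complements. Your closing remarks on the independence assumptions and on the longest bar carrying no birth-noise are points the paper passes over silently (it simply asserts the summands are i.i.d.\ and uses exponent $n$), so they are a welcome clarification rather than a deviation.
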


\begin{proof}Considering the case where $\gamma$ is the identity, we see that  
\[ d(B,B') \leq \sup_i \vert b_i- b'_{i} \vert + \vert d_i - d'_{i} \vert. \]  If $B \sim B’$, the differences between the new and original values of the births and deaths all follow an exponential distribution, 
$$ \vert b_i- b'_{i} \vert\sim\textup{Exp}(\lambda) \text{ and }\vert d_i- d'_{i} \vert\sim\textup{Exp}(\lambda) .$$ 
 
The cumulative probability distribution function of  $\vert b_i- b'_{i} \vert + \vert d_i - d'_{i} \vert$ is thus given by an Erlang$(2,\lambda)$ distribution
\[\mathbb{P}\big(\vert b_i- b'_{i} \vert + \vert d_i - d'_{i} \vert \leq \varepsilon \big) = 1- (1 +\lambda  \varepsilon )\exp (-\lambda \varepsilon).\] 
 
Because we consider the supremum over $i$ of the sum $\vert b_i- b'_{i} \vert + \vert d_i - d'_{i} \vert$, and all of the $\vert b_i- b'_{i} \vert + \vert d_i - d'_{i} \vert$ are \emph{i.i.d}, it follows from the theory of order statistics that 
\begin{equation}\label{eq_lemma_notcomplement}
\mathbb{P}\big(d(B,B') \leq \varepsilon \big) \geq \mathbb{P}(\sup_i \vert b_i- b'_{i} \vert + \vert d_i - d'_{i} \vert \leq \varepsilon) =  (1-\exp(- \lambda \varepsilon)(\lambda \varepsilon +1))^n. 
\end{equation}
Considering the probability of the complement leads to the result in Equation \ref{eq_proba_bottleneck}.
\end{proof}

Lemma \ref{bottleneck_stability_lemma} implies that the TNS is stable with respect to the bottleneck distance, in a manner dependent on the parameter $\lambda$. To illustrate this dependence, we plot the function of Equation \ref{eq_proba_bottleneck} for different values of $\lambda$ in Figure \ref{bottleneck_proba_fig}. The curve obtained for $\lambda = 1$ (blue in Figure \ref{bottleneck_proba_fig}) makes it clear that setting $\lambda=1$ ensures that the TNS gives rise to a diverse family of new trees that are nonetheless topologically not significantly far from the original ones, which is the desired goal from a biological perspective. Making an appropriate choice of the parameter $\lambda$ is thus essential.

\begin{figure}[H]
    \centering
    \includegraphics[scale=0.7]{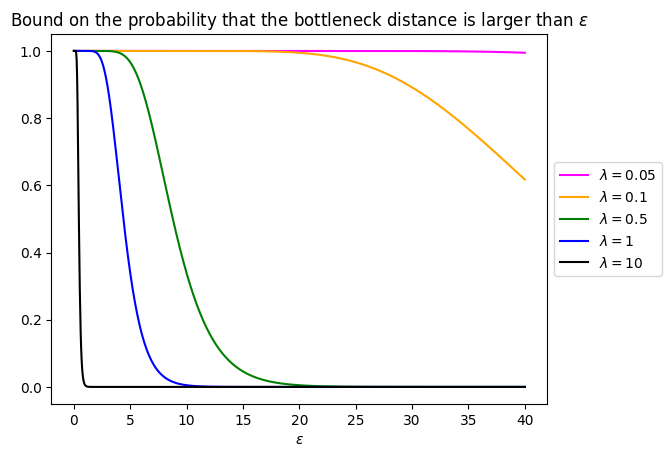}
   \caption{Upper bound on the probability that the bottleneck distance between $B$ and $\tns\circ\tmd(B)$ is larger than $\varepsilon$ (Equation \ref{eq_proba_bottleneck}) for various values of $\lambda$ and for $n=10$. }
    \label{bottleneck_proba_fig}
\end{figure}

If $B \sim B'$, the bound by $\gamma = I$ in the formula for the bottleneck distance is computed between pairs of points that follow the same exponential law, as the order of bars is preserved. If $B \not\sim B'$, for example when a switch of bars occurs, then we cannot assume that the distances between matched pairs of points in the computed bottleneck distance follow the same law. 
Change of permutation type between $B$ and $B'$ is more frequent for small~$\lambda$ (Figure~\ref{stability_swaps}). Therefore, the previous lemma is usually not applicable  for small values of $\lambda$, for which it is any case not particularly useful, as shown in Figure \ref{bottleneck_proba_fig}. In Figure \ref{erlang_balls} we summarize graphically the discussion above. The transposition of bars is studied in detail in the next section.


 \begin{figure}[H]
    \centering
    \includegraphics[scale=0.5]{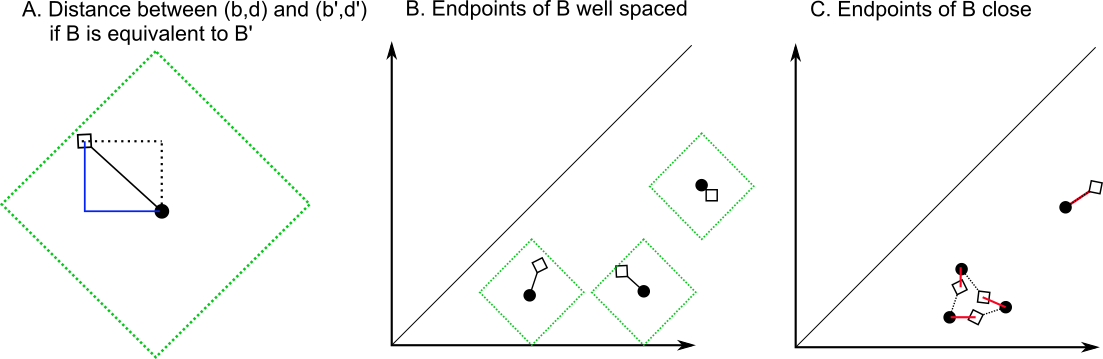}
    \footnotesize
    \put(-380,85){$\sim$ Exp$(\lambda)$}
    \put(-352,65){$\sim$ Exp$(\lambda)$}
    \put(-400,43){\color{blue}$\sim$ Erlang$(2,\lambda)$}
    \caption{A. The $\ell_1$-distance between the black bullet and the diamond follows an Erlang$(2,\lambda)$ distribution. The interior of the green square defines a bound for the $\ell_1$-distance from the black bullet that depends on the value of the parameter~$\lambda$. B. If the endpoints of the bars of $B$ are sufficiently far away from each other and $B\sim \tmd \circ \tns(B)$, then, with high probability, taking $\gamma =I$ will minimize the $\ell_1$-distance between pairs of endpoints of bars. C. If the endpoints of $B$ are instead close to each other, then it is more likely that $B\not\sim \tmd \circ \tns(B)$, so that the optimal choice of $\gamma$ (represented by red segments) is not the identity. The red distances do not necessarily follow exponential distributions, so the proof of Lemma \ref{bottleneck_stability_lemma} does not apply.}
    \label{erlang_balls}
\end{figure}

We perform two experiments to illustrate our theoretical results computationally. First, we compute the bottleneck distance between input barcodes $B$ and output barcodes $B'$, for increasing values of lambda $\lambda$ from  $0.01$ to $2$ (see Figure~\ref{stability_BT}A). The computational results (average bottleneck distances in red, Figure~\ref{stability_BT}A2) fit the curve of the expected mean of the probability density function\footnote{The PDF can be deduced from Equation \ref{eq_lemma_notcomplement} in the proof of Lemma \ref{bottleneck_stability_lemma} by taking the derivative of $(1-\exp(- \lambda \varepsilon)(\lambda \varepsilon +1))^n$.} well (blue curve). 

We also compute the cumulative density function for $0 \le \epsilon \le 200$ and $0 \le \lambda \le 2$, which we compare to the computational results (red points, Figure~\ref{stability_BT}A3), showing that they match the theoretical prediction (blue colormap) very closely for a wide range of sufficiently large $\lambda$ (zoom-in, Figure~\ref{stability_BT}A3). However, for very small values of $\lambda$, the condition $B \sim B'$ is not always satisfied, leading to the observation that for $\lambda < 0.2$, the computationally computed bottleneck distances are larger than the theoretically expected values.

Second, we compute the bottleneck distance between input and output barcodes for various fixed values of $\lambda$, where the input barcodes arise by gradually decreasing the death time of one bar of an initial barcode $B$ and thus increasing the distance to the next death time in the sequence (see Figure~\ref{stability_BT}B). All other bars of the initial barcode $B$ remain the same. We observe that the bottleneck distance depends only on the value of $\lambda$ and not on the distance between the bars of the input barcode.

\begin{figure}[H]
\centering
\includegraphics[scale=0.34]{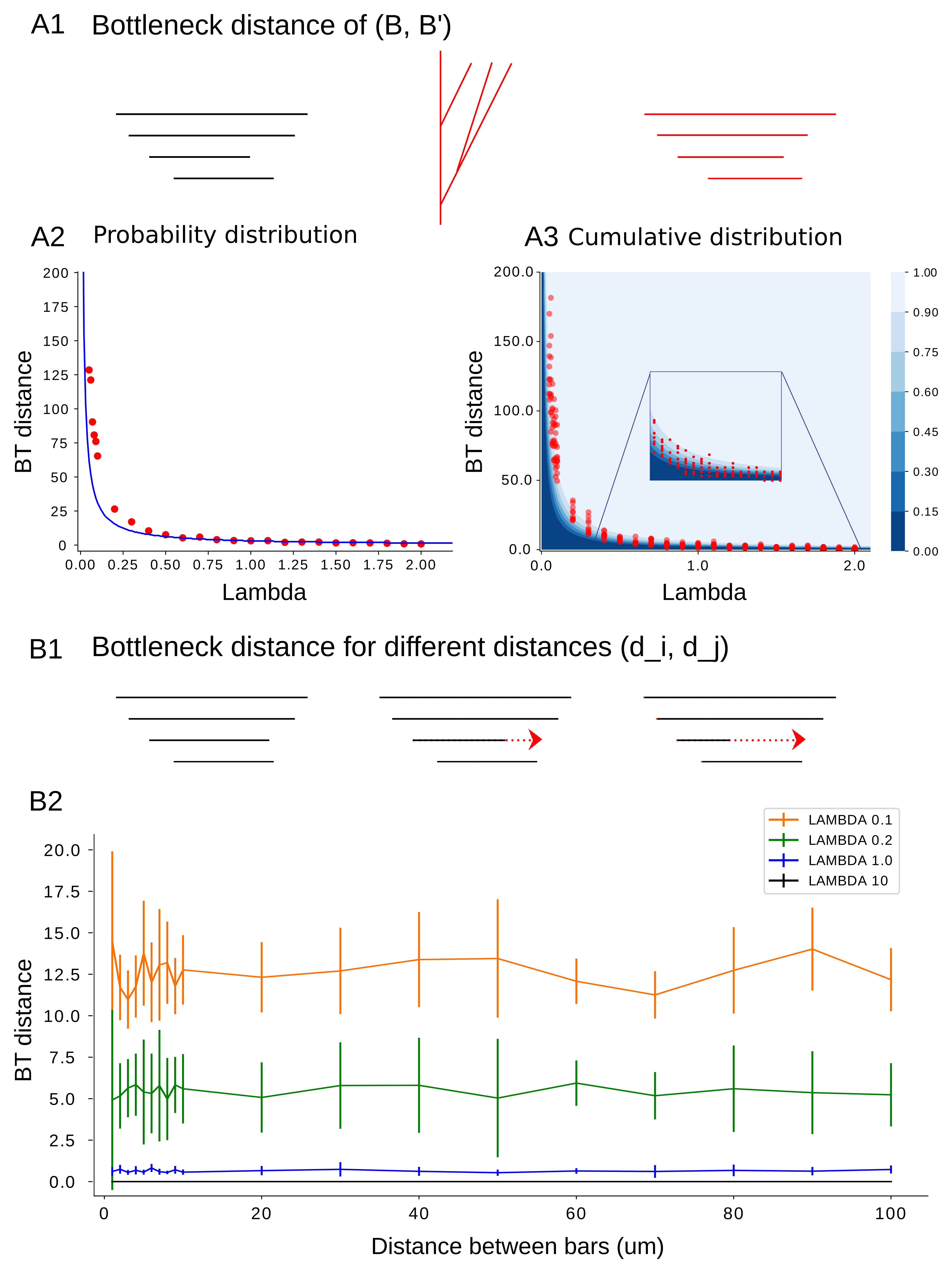} 
\caption{A. Bottleneck distance as a function of $\lambda$. We compute the bottleneck distance between an input barcode $B$ and an output barcode $B'$ for $\lambda = 0.01 - 2$. A1. From barcode $B$ (in black), a tree (in red) is generated using the TNS which results in a new barcode $B' = \tmd \circ \tns (B)$ (in red). A2. The average bottleneck distance (red points) is compared to the expected mean of the probability distribution function found in Lemma~\ref{bottleneck_stability_lemma} (blue curve). A3. The bottleneck distances (red) are compared to the cumulative distribution probability for $0< \epsilon < 200$ and $0< \lambda < 2$ (blue). B. Bottleneck distance between $B$ and $B'$ as a function of distances between bars in $B$. B1. We consider barcodes of the same permutation type for different distances between two bars $(b_{i},d_{i})$ and $(b_{j}, d_{j})$ of the initial barcode $B$ that are consecutive in the order of deaths. B2. For each input barcode with increasing $d_{i}$, distance between death times presented in $x$-axis, $100$ synthesized barcodes are generated and the bottleneck distance between the input and output barcodes is computed ($y$-axis), which depends only on the value of $\lambda$ and not on the distance between the bars.}
\label{stability_BT}
\end{figure}

\subsection{Transposition stability}\label{sec:switchclass}

As the TNS algorithm is a stochastic process, the image of any strict barcode $B= \{(b_i,d_i\}$ under the composite $\tmd \circ \tns$ essentially always differs at least slightly from $B$. Here we determine the probability that the orders of the death times of two specific bars of $B$ and $\tmd\circ \tns(B)=B'=\{(b_i',d_i')\}$ are different, so that $B$ and $\tmd\circ \tns(B)$ are not combinatorially equivalent, i.e., the associated permutations are different, as long as the birth times are not also transposed.

\begin{figure}[H]
    \centering
    \includegraphics[scale = 0.6]{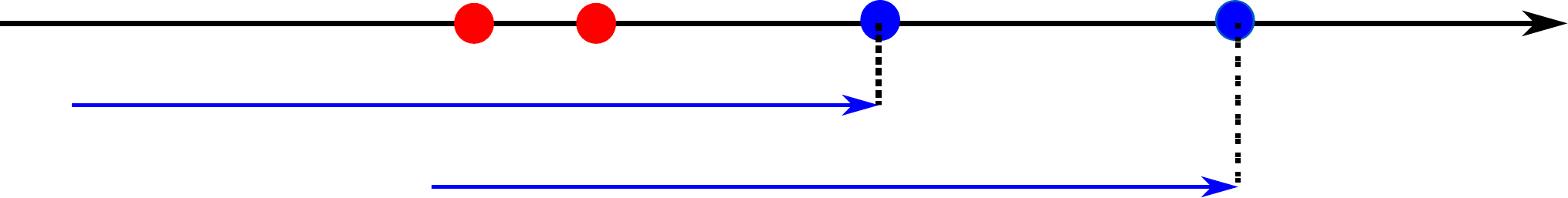}
    \put(-220,50){$d_i'$ }
    \put(-250,50){$d_j'$ }
    \put(-160,50){$d_i$ }
    \put(-80,50){$d_j$ }
    \put(-280,10){$|d_i - d_i'| \sim \text{Exp}(\lambda)$ }
    \put(-200,-10){$|d_j - d_j'| \sim \text{Exp}(\lambda)$ }
    \caption{ We are interested in the case where $d_j'<d_i'$ when we start from $d_i<d_j$. The distances $|d_i - d_i'|$ and $ |d_j-d_j'|$ both follow an exponential law of parameter $\lambda$. The probability to terminate increases exponentially when approaching $d_i$ and $d_j$, as represented by the blue arrows. }
    \label{didj}
\end{figure}

\begin{lemma} \label{switch_class}
Let $B$ be a strict barcode, and let $(b_i, d_i), (b_j, d_j)$ be bars of $B$ such that $d_i < d_j $. Let  $(b'_i,d_i')$ and $(b'_j,d_j')$ denote the corresponding bars in $B'=\tmd\circ \tns (B)$. The probability that $d_j'<d_i'$ is \[\mathbb{P}(d_j' < d_i') = \frac{1}{2} \exp(-\lambda (d_j-d_i)).\]
\end{lemma}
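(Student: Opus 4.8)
The plan is to translate the death-order swap event into a tail probability for two independent exponential variables. Because the TNS termination probability rises to its maximum value $1$ exactly at the target death $d_i$, a growing branch can never survive past $d_i$, so the realized termination satisfies $d_i'\le d_i$. I would therefore write $d_i'=d_i-X_i$ and $d_j'=d_j-X_j$, where by the hypotheses recalled just before the lemma the displacements $X_i=\vert d_i-d_i'\vert$ and $X_j=\vert d_j-d_j'\vert$ are each distributed as $\mathrm{Exp}(\lambda)$. Since the two bars correspond to distinct branches, grown by independent random walks, $X_i$ and $X_j$ are independent.

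With this notation the swap event becomes a simple linear inequality: $d_j'<d_i'$ if and only if $d_j-X_j<d_i-X_i$, i.e.\ $X_j-X_i>d_j-d_i$. Setting $\Delta:=d_j-d_i$, which is strictly positive by the standing hypothesis $d_i<d_j$, the quantity to compute is $\mathbb{P}(X_j-X_i>\Delta)$.

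The final step is a routine computation of this tail. Conditioning on $X_i=x$ and using $x+\Delta\ge 0$, one has $\mathbb{P}(X_j>x+\Delta)=\exp(-\lambda(x+\Delta))$, so integrating against the density of $X_i$ gives
\[
\mathbb{P}(X_j-X_i>\Delta)=\int_0^\infty \lambda e^{-\lambda x}\,e^{-\lambda(x+\Delta)}\,dx=\lambda e^{-\lambda\Delta}\int_0^\infty e^{-2\lambda x}\,dx=\tfrac{1}{2}e^{-\lambda\Delta},
\]
which is exactly $\tfrac{1}{2}\exp(-\lambda(d_j-d_i))$. Equivalently, one may recognize $X_j-X_i$ as following a Laplace distribution with density $\tfrac{\lambda}{2}e^{-\lambda\vert z\vert}$ and simply read off its tail.

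The arithmetic here is immediate; the only genuinely delicate points are the modeling assumptions feeding into it. The main obstacle is justifying cleanly that $d_i'\le d_i$ — so that the absolute value $\vert d_i-d_i'\vert$ resolves with a definite sign and the two exponential displacements can be subtracted — together with the independence of $X_i$ and $X_j$. Both rest on the precise mechanics of the TNS (the termination probability attaining $1$ at the target, and the independence of growth across separate branches) rather than on any probabilistic computation, so I would take care to state them explicitly before reducing to the tail integral above.
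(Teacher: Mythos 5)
Your proof is correct and takes essentially the same approach as the paper: both reduce the swap event $d_j'<d_i'$ to the tail probability $\mathbb{P}(X_j-X_i>d_j-d_i)$ for independent $\mathrm{Exp}(\lambda)$ displacements and evaluate it using the fact that $X_j-X_i$ has density $\tfrac{\lambda}{2}e^{-\lambda t}$ for $t\ge 0$ (your conditioning integral simply rederives this Laplace-tail formula). The only difference is presentational: you explicitly justify the sign convention $d_i'\le d_i$ and the independence of the displacements, which the paper leaves implicit in its setup and Figure on $d_i,d_j$.
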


The TNS thus exhibits a sort of ``transposition stability'':  the probability that the death times of two bars will be transposed decreases exponentially with the distance between those death times.

\begin{proof}
We compute $\mathbb{P}(d_j' < d_i') = \mathbb{P}(d_j' < d_i' \,|\, d_i < d_j)$, the probability that $d_j' < d_i'$ given that $d_i < d_j $. Observe first that
\begin{multline*} \mathbb{P}(d_j' < d_i') = \p(d_j + (d_i - d_i') < d_i + (d_j - d_j'))\\ = \p(d_j + X_i < d_i + X_j)  = \p(X_j - X_i > d_j - d_i) .
\end{multline*}

Let $Y = X_j - X_i$. As $X_j$ and $X_i$ both follow an exponential law, the density function of their difference,  $Y$, is given by  $f_Y(t) = \frac{\lambda}{2} \exp(-\lambda t)$ when $t \geq 0$. Therefore, \[\mathbb{P}(d_j' < d_i')  = \p(X_j-X_i > d_j-d_i) = \int_{d_j-d_i}^\infty f_Y(t)dt = \frac{1}{2} \exp(-\lambda (d_j-d_i)).\]
\end{proof}

\begin{remark} \label{stochastic_several_changes}
Since the TNS is based on a stochastic process, multiple transpositions can occur when generating a new tree from a barcode. This makes it challenging to determine the overall probability of changing equivalence classes when computing the composite $\tmd\circ \tns$. Note that the TNS might also affect the birth order, but we will not discuss this possible effect in this paper. For the following experiments, the selected examples do not experience birth-switches, as the cells from which we computed the barcodes were chosen with sufficient gaps between birth values to avoid such switches.
\end{remark}

We perform the following computational experiment to evaluate the transposition stability results. We systematically vary the distance between two bars by changing the death time of a bar in the input barcode and compute the percentage of order changes that occur for different values of lambda(see Figure~\ref{stability_swaps}). We compare the theoretical results (solid lines) to the computational experiment (scatter points) for five different values of lambda. Note that for this experiment, the birth times are chosen to be sufficiently distinct, and only the number of switches due to permutations that correspond to death changes are counted. The experimental results match the theoretical prediction with high accuracy, where we compute the error as the average distance of the computational points from the theoretical curve ($\lambda=10, error=0\%$, $\lambda=5, error=0.02\%$, $\lambda=1, error=0.5\%$, $\lambda=0.5, error=0.9\%$, $\lambda=0.1, error=3\%$, $\lambda=0.05, error=5\%$). Note that the error increases for smaller values of $\lambda$, due to the computational artefacts introduced when $\lambda$ is small.

\begin{figure}[H]
\centering
\includegraphics[scale=0.4]{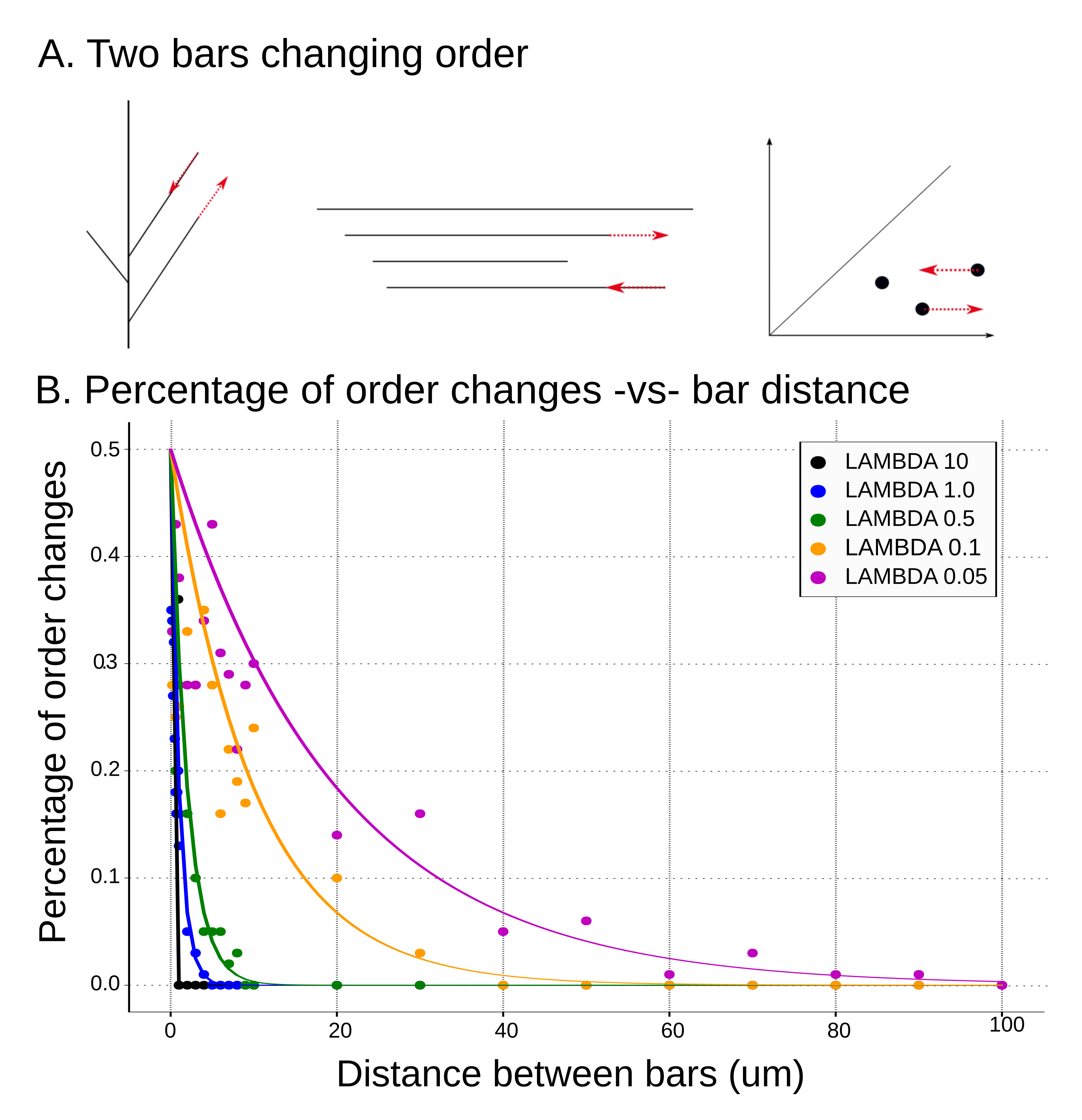} 
\caption{A. Example of two bars changing order, which results in switching of classes. We show here a tree, its barcode, and the corresponding persistence diagram when two consecutive deaths switch their order. The impact of the change is illustrated by the red arrows. B. Percentage of order changes per 100 repetitions for varied distance between death times of two consecutive bars of the input barcode. Comparison of theoretical results (solid lines) to simulations (scatter plot) for different values of lambda.}
\label{stability_swaps}
\end{figure}

\section{Computational exploration of tree-realization}\label{sec:computations}

In this section we present computational results that illustrate the complex relationship between the equivalence class of a barcode and its possible tree-realizations.

We first present four results concerning all geometric trees: a computation of the distribution of tree-realization numbers across the set of equivalence classes of strict barcodes for various numbers of bars, a computation of the empirical distribution of combinatorial types of geometric trees in a synthesized population as a function of the equivalence class of the input barcode, a measurement of the diversity of TMD-equivalence classes among the realizations of a fixed barcode, and  simulations of the fluctations in tree-realization number that can occur as two bars gradually switch the order of their deaths. 

We conclude by reporting on an experiment that sheds light on the distinguishing characteristics of ``biological'' geometric trees, i.e., those that arise from digital reconstructions of neurons.

\subsection{The distribution of tree-realization numbers}

We illustrate here how the number of tree-realizations of strict barcodes with $n+1$ bars depends on $n$. In Figure~\ref{count_reals} we present the distribution of tree-realization numbers across  equivalence classes of barcodes with $n+1$ bars, for $1\leq n \leq 10$. As mentioned in section \ref{sec:combinatorics}, the tree-realization number is maximal for a fixed number of bars if and only if the barcode is strictly ordered. We observe an exponential-like behavior in the distribution of tree-realizations with the increase of the number of bars. We expect to link this behavior with intrinsic properties in the space of barcodes in a future work.

\begin{figure}[H]
    \centering
    \includegraphics[scale=0.4]{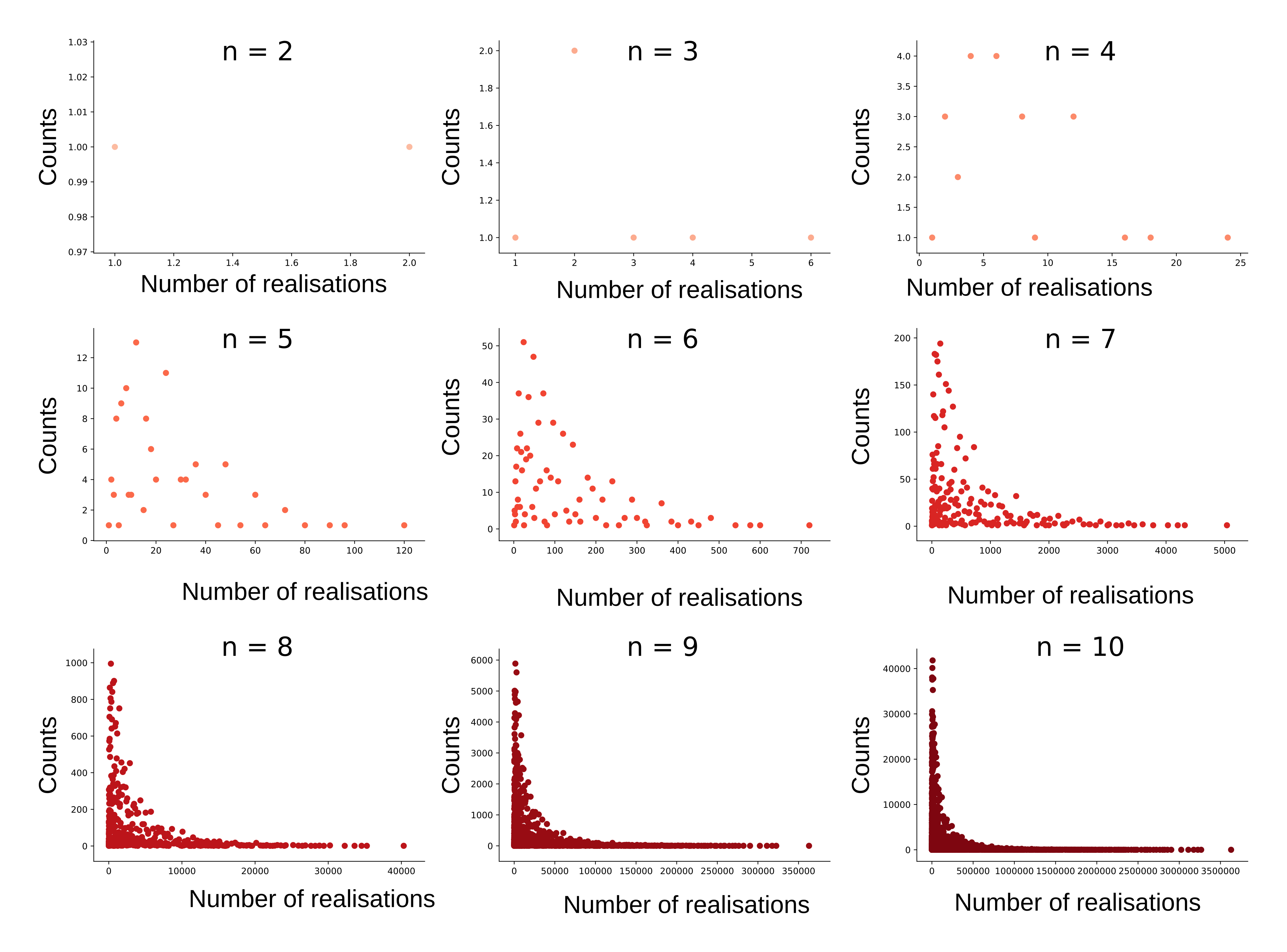}
    \caption{Histogram of tree-realization numbers for equivalence classes of barcodes with $n + 1$ bars ($1\leq n \leq 10$). The maximal tree-realization number for a fixed number of bars can be achieved with exactly one equivalence class, that of the strictly ordered barcode.}
    \label{count_reals}
\end{figure}

\subsection{Empirical distributions of combinatorial types of trees}

In this section, we explore computationally the probability to generate different combinatorial tree types (see Figure~\ref{treerez} A-F) with the TNS. We observe that this probability depends on the choice of the parameter $\lambda$ (cf.~section \ref{sec:tns}). When $\lambda > 2$, the TNS is more likely to generate trees with all branches connected to the longest branch, due to the design of the algorithm. On the other hand, for smaller values of $\lambda$, the probability to generate different types of trees is approximately uniform. 

Focusing on our prefered value of $\lambda$, we generated $1000$ trees for $\lambda = 1$ and computed the percentage of each combinatorial tree type that is realized for each equivalence class of barcodes with four bars (Figure~\ref{real_prob}). There are six possible equivalence classes of strict barcodes with four bars and six combinatorial equivalence classes of geometric trees with four branches.

\begin{figure}[H]
    \centering
    \includegraphics[scale=0.5]{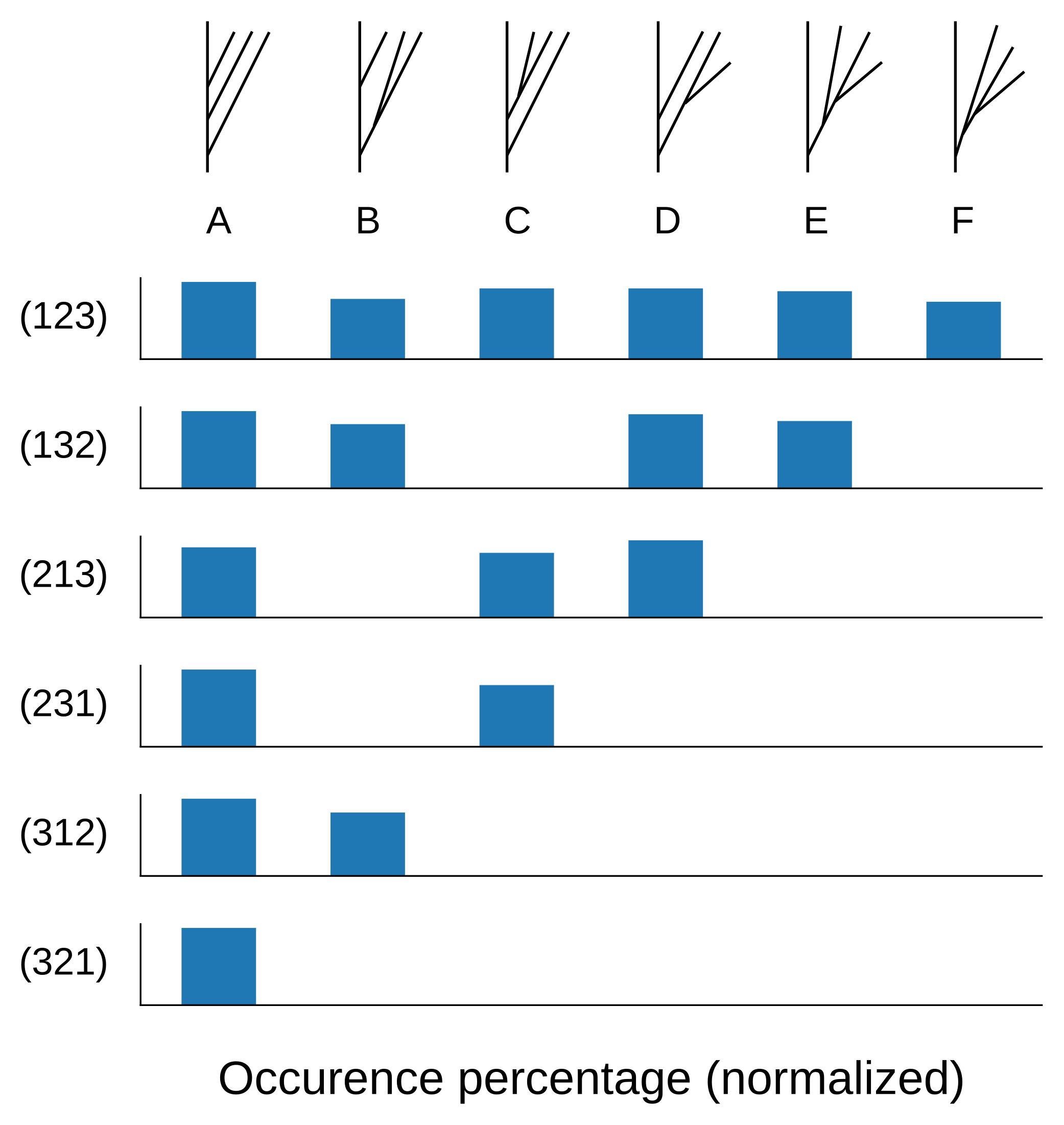}
    \caption{Empirical distribution (percentage of $1000$ trees) of synthesized geometric trees with four branches by combinatorial tree type (columns A - F) for a given input barcode equivalence class (rows), when $\lambda=1$.  We observe that the distribution is approximately uniform.}
    \label{real_prob}
\end{figure}

\subsection{Diversity of realized TMD-equivalence classes}\label{sec:diversity}

We now explore the diversity of TMD-equivalence classes of geometric trees that can be synthesized from a fixed barcode, in the particular case of the TMD of a biologically meaningful tree. For a fixed geometric tree with eight branches arising from a digital reconstruction of layer 4 pyramidal cell, we computed its $\tmd$ barcode, to which we applied the TNS with $\lambda=1$ to generate a set of $100$ geometric trees. We  computed the barcode-type and the persistence diagrams of the synthesized trees (Figure~\ref{real_synth}). 

In agreement with the results presented in Figure~\ref{stability_BT}, the persistence diagrams of the synthesized trees (Figure~\ref{real_synth}B, in blue) are essentially indistinguishable from the persistence diagram of the original barcode (Figure~\ref{real_synth}B, in red). On the other hand, the TMD-equivalence class of a synthesized tree is not necessarily equal to that of the original tree (Figure~\ref{real_synth}A).  Here we represent the TMD-equivalence class of a tree in terms of the permutation $\sigma_B$ corresponding to the equivalence class of its TMD barcode $B$.  In this graphical representation, we plot birth (or bifurcation) index $k$ (on the $x$-axis) versus death (or termination) index $\sigma_B(k)$ (on the $y$-axis).  A strictly ordered barcode would thus correspond to the set of points along the diagonal in this representation.  

\begin{figure}[H]
    \centering
    \includegraphics[scale=0.5]{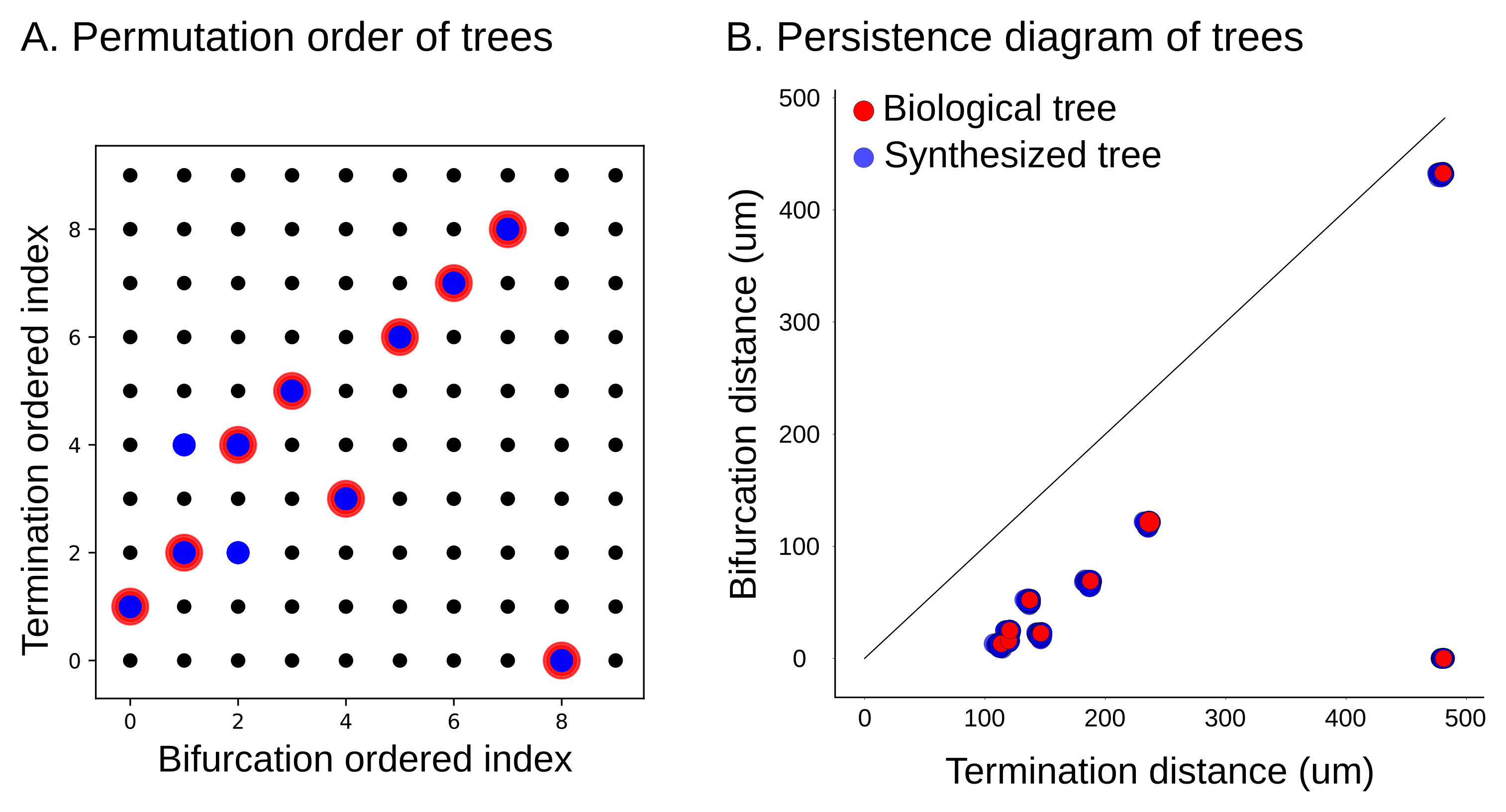}
    \caption{Barcode-equivalence class, represented by the corresponding permutation, (A) and persistence diagram (B) of $100$ synthesized cells based on a geometric tree with eight branches, extracted from a layer 4 pyramidal cell. The barcode-equivalence classes of the synthesized trees (represented by blue dots) can differ from that of the original tree  due to the stochastic nature of synthesis algorithm. The persistence diagrams of the synthesized trees  (B, blue) are essentially indistinguishable from those of the original tree (B, red).}
    \label{real_synth}
\end{figure}

\subsection{Statistics of changing classes}

Motivated by the theoretical results on the probability to change classes in section \ref{sec:switchclass}, we analyze here several simulations of gradual switching of death order of two bars and the resulting effect on tree realizations and their associated barcodes.

Let $B$ be a strict barcode, and let $(b_i, d_i), (b_j, d_j)$ be bars of $B$ such that $d_i < d_j $. By Lemma \ref{switch_class}, for a fixed choice of the parameter $\lambda$ (cf.~section \ref{sec:tns}), the probability that the order of these two bars is reversed in $\tmd\circ \tns (B)$ depends exponentially on the distance between $d_{i}$ and $d_{j}$:  
\[\mathbb{P}(d_j' < d_i') = \frac{1}{2} \exp(-\lambda (d_j-d_i)).\] 
Thus, when the distance between $d_{i}$ and $d_{j}$ decreases, the probability that the order of bars changes increases. When there is no $k$ such that $d_i<d_k<d_j$, Proposition \ref{change_class_real} provides a formula for the tree-realization number of the new barcode obtained when such a switch happens, as long as the order of the birth times is not also reversed.  

We start with a geometric tree $T$ extracted from a digital reconstruction of a neuron and compute its associated barcode $B = \tmd(T)$. We choose two bars $(b_{i},d_{i})$ and $(b_{j}, d_{j})$ of $B$ that are consecutive in the order of deaths and divide the interval $(d_i, d_j)$ into 50 equally sized subintervals. For $0\leq k\leq 50$, let $B_k$ be a barcode that is identical to $B$, except that its $i^{\mathrm{th}}$ bar is $\big(b_i, d_{i} + k(d_{j} -  d_{i})/50\big)$ and its $j^{\mathrm{th}}$ bar is $\big(b_j, d_{j} - k(d_{j} -  d_{i})/50\big) $. An interesting way to visualize this change is to think of $B_k$ as migrating along the edge between $B$ and the barcode with $d_i$ and $d_j$ permuted in the corresponding Cayley graph as $k$ increases. The middle point of the edge corresponds to the non-strict barcode for which the two deaths are equal. 

Let $B'_k=\tmd \circ \tns (B_k)$ for all $k$. Because of the stochastic nature of the TNS algorithm, the  permutation equivalence class of $B'_k$ may be different from that of $B_k.$ Figure~\ref{ex_noise_switch} provides an example of this construction, where the barcodes are represented as persistence diagrams for visualization purposes (cf.~section \ref{sec:barcodes}).



\begin{figure}[H]
    \centering
    \includegraphics[scale = 0.5]{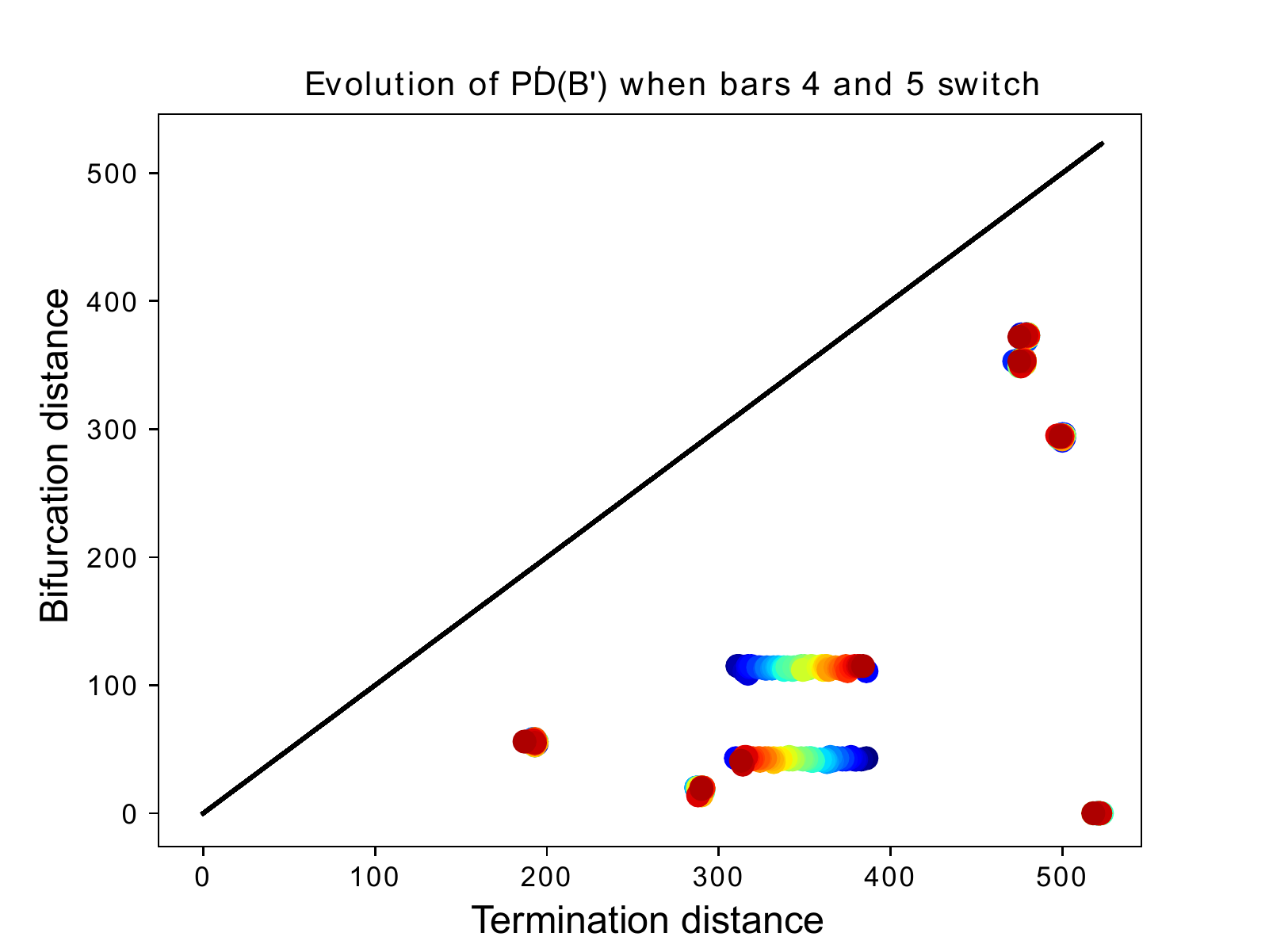}
    \caption{We begin with a barcode with $8$ bars. The death times $d_{i_4}$ and $d_{i_5}$ (i.e., the $4$th and $5$th largest death times) are slowly switching as $k$ increases, represented by red-shifting of the color of the points in the persistence diagram. When $k=0$ (in red), we have the original barcode $B$, and when $k=50$ (in blue) we obtain a barcode identical to the original, except that $(b_{i_4}, d_{i_4})$ is replaced by $(b_{i_4}, d_{i_5})$ and $(b_{i_5}, d_{i_5})$ by $(b_{i_5}, d_{i_4})$.}
    \label{ex_noise_switch}
\end{figure}

To test whether the barcode $B'_k$ is equivalent to the original barcode $B$, we compute its tree-realization number: if $\trn(B'_k)\not= \trn(B)$, then $B$ and $B'_k$ are not equivalent. 
Note that for the specific process that gives rise to $B_k$, it is likely that only the studied death-switch could lead to a difference between the tree-realization numbers of the input and output barcodes, unless two other deaths are too close to each other in the input barcode, as in the last row of Figure~\ref{death_switch}, which we explain further below. Therefore, the tree-realization number provides a very good indication of whether the switch of deaths took place, i.e.,  if $B \sim B_k'$ or not. Indeed, two barcodes that are the same except for two deaths that switched have different tree-realization number, cf. Proposition~\ref{change_class_real}.
Figure~\ref{death_switch} shows several examples of the endpoint-switching process described above and the corresponding evolution of the tree-realization number as $k$ increases. The corresponding permutation type and tree-realization number of each initial barcode, and the bars that are switched  are listed in Table \ref{table_fig_switch_death}.

\begin{table}[H]
\begin{center}
\begin{tabular}{|l|l|l|l|}
\hline
\textbf{}              & \textbf{Permutation}       & \textbf{TRN} & \textbf{Bars that switch} \\ \hline
\textbf{$B^1$}         & $(2 ,6, 8, 1, 5, 7, 4, 3)$ & 810          & $4$ and $5$               \\ \hline
\textbf{$\hat B^1$} & $(2 ,6, 8, 5, 1, 7, 4, 3)$ & 540          & $4$ and $5$               \\ \hline
\textbf{$B^2$}         & $(5, 7, 6, 4, 2, 1, 3)$    & 12           & $2$ and $3$               \\ \hline
\textbf{$\hat B^2$} & $(5, 6, 7, 4, 2, 1, 3)$    & 18           & $2$ and $3$               \\ \hline
\textbf{$B^3$}         & $(5, 7, 6, 4, 2, 1, 3)$    & 12           & $3$ and $4$               \\ \hline
\textbf{$\hat B^3$} & $(5, 7, 4, 6, 2, 1, 3)$    & 18           & $3$ and $4$               \\ \hline
\textbf{$B^4$}         & $(8, 6, 7, 4, 3, 1, 2, 5)$ & 20           & $1$ and $2$               \\ \hline
\textbf{$\hat B^4$} & $(6, 8, 7, 4, 3, 1, 2, 5)$ & 40           & $1$ and $2$               \\ \hline
\end{tabular}
\end{center}
\caption{For each example displayed in Figure~\ref{death_switch}, we list the permutation type and the tree-realization number of the original barcode $B$ and of $\hat B = B_{50}$, and the indices of the bars that are switched. The superscript $i$ in $B^{i}$ indicates the corresponding row of Figure~\ref{death_switch}. For example, the largest death time of barcode $B^1$ is the second bar (in order of birth times), and its shortest death is the third one. When we switch the $4$th and $5$th (from largest to smallest) death times in $B^1$ and $\hat B^1$, the TRN changes from $810$ to $540$.}
\label{table_fig_switch_death}
\end{table}

The top row of Figure~\ref{death_switch} illustrates very well the exponential behavior of changing classes. When the distance between the death times of  the two bars is very small (they are the closest when $k=25$), the tree-realization number oscillates between its values for two different classes and otherwise stays constant. 

The two middle rows come from the same biological tree and hence have the same starting barcode. The difference is that in the second row, the death times of  the two bars are already very close, leading to more frequent changes of equivalence class than in the third row. 

The bottom row illustrates Remark \ref{stochastic_several_changes} well. Since several bars are close to each other (represented here by several points in the persistence diagram that are close to each other), applying the TNS algorithm leads to frequent changes in equivalence classes, leading to the oscillatory behavior of the tree-realization number curve.

\begin{figure}[H]
    \centering
   \includegraphics[scale = 0.35]{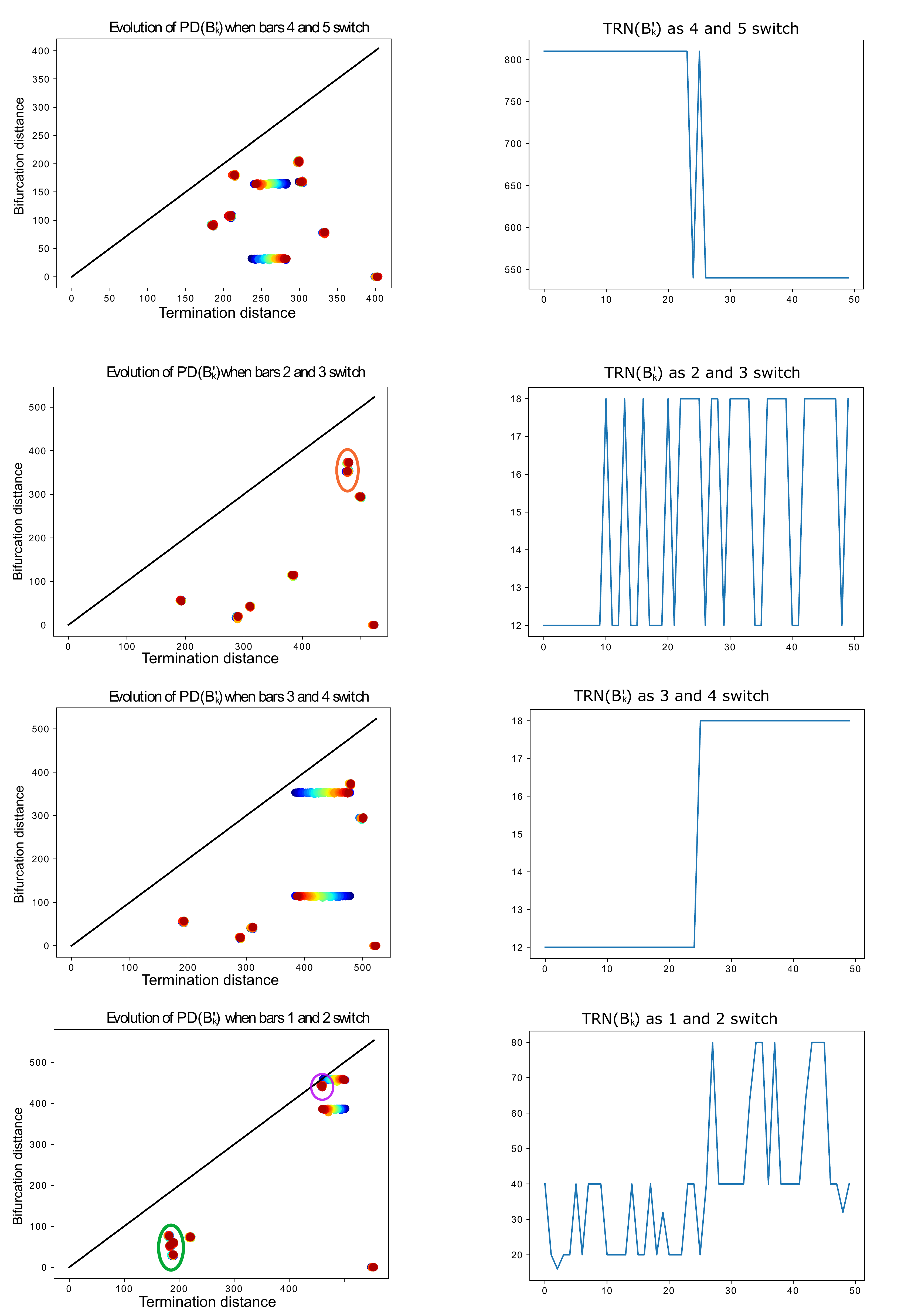}
    \caption{On the left, evolution of $\mathrm{PD}(B_k')$ as $k$ increases (represented by red-shifting of the point color, from red $k=0$ to blue $k=50$), for various pairs of bars. When not clear, we circle in orange the two points that switch. On the right, the corresponding evolution of the tree realization number $\trn(B'_k)$ as $k$ increases.
    For instance, as indicated in Table \ref{table_fig_switch_death}, the tree-realization number of $B^1$ is $810$ and that of $\hat B^{1} = B_{50}^{1}$ is $540$. The barcodes $B_k'$ exhibit the behavior described in Lemma \ref{switch_class}, except for the last row, in which death times that are too close to each other (circled in purple and green) interfere with the process. Without this interference, the tree-realization numbers should oscillate between $20$ and $40$. When $k$ gets close to $50$ (blue), the death time $d_{i_1}$ (largest death time) starts interfering with the third one $d_{i_3}$ (circled in purple) in the tree synthesis process.}
    \label{death_switch}
\end{figure}

\subsection{Tree-realizations of biological barcodes}


Since the original objective in developing the TMD was to classify digital reconstructions of neurons, it is natural to ask whether those barcodes that arise biologically exhibit any special characteristics compared to those arising from other sets of geometric trees. In Figure~\ref{count_real} we employ the graphical representation of permutations introduced in section \ref{sec:diversity} to display as red dots all possible permutations corresponding to TMD-barcodes of biological trees with at most 30 branches arising from a population of digital reconstructions of neurons. Clearly, only a small fraction of the set of all possible permutations can be realized as the barcode-equivalence classes of geometric trees extracted from digital reconstructions of neurons, as every black dot in this plot can arise as a pair $\big(k, \sigma(k)\big)$ for some permutation $\sigma$. In future work, we intend to study the biological relevance of this restriction.

\begin{figure}
    \centering.
    \includegraphics[scale=0.7]{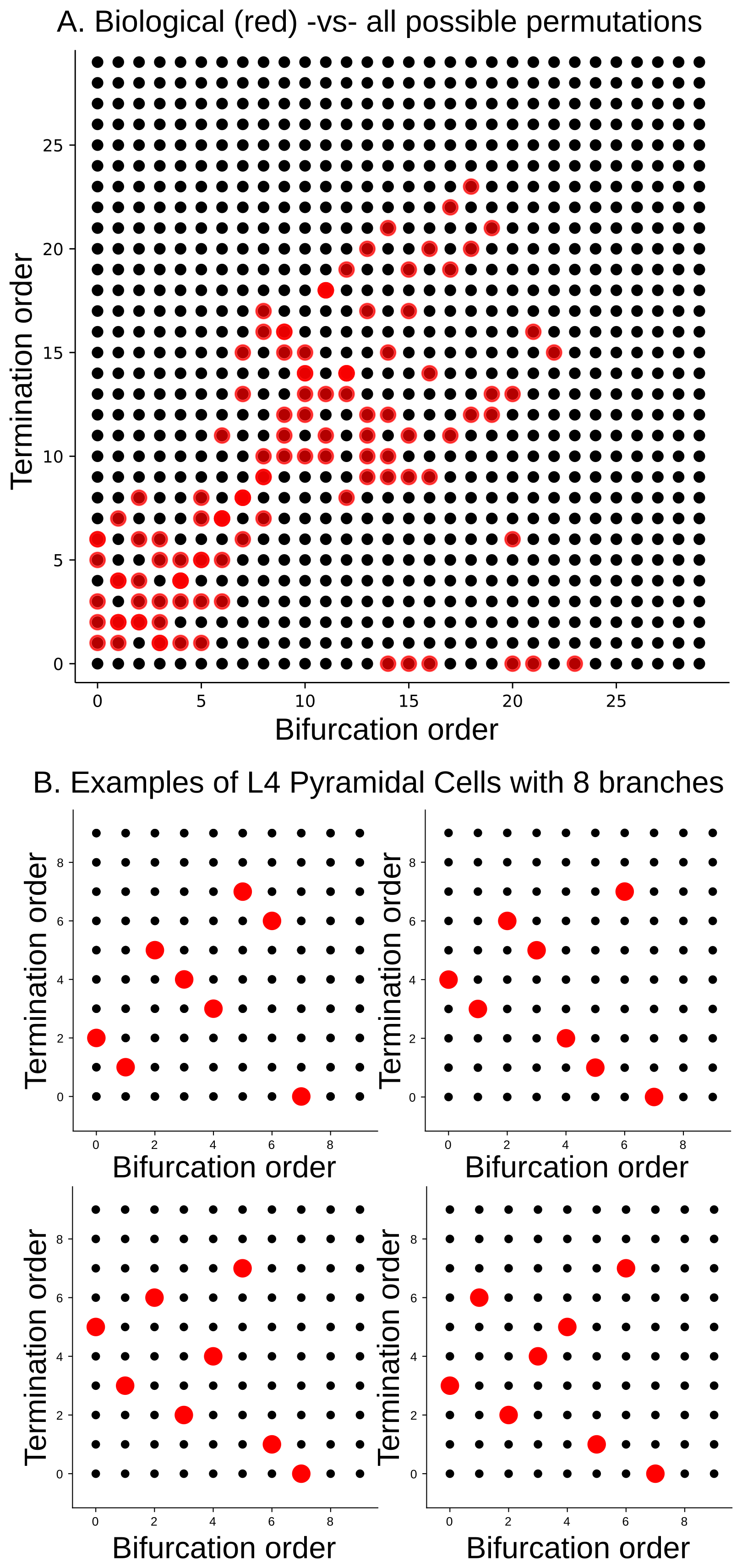}
    \caption{(A) TMD-equivalence classes of a population of biological geometric trees with at most 30 bars (red dots), represented by their associated permutations.  (B) Examples of TMD-equivalence classes of individual biological geometric trees with eight branches, extracted from layer 4 pyramidal cells (red dots).} 
    \label{count_real}
\end{figure}

To provide further insight into the subset of TMD-equivalence classes of biological geometric trees within the set of all possible TMD-equivalence classes, we computed the tree-realization number as a function of the number of bars, for a population of barcodes obtained by applying the TMD to geometric trees extracted from a population of digitally reconstructed neurons. We compared the values obtained to the maximum tree-realization number and to the tree-realization numbers of randomly chosen barcodes with the same number of bars (Figure~\ref{comp_rand_bio}). Interestingly, the barcodes that correspond to apical dendrites (relatively complex neural trees that perform significant processing tasks) exhibit a more narrow range of possible tree-realization numbers than random barcodes of the same size. On the other hand, barcodes of basal dendrites (less complex neuronal trees) exhibit tree-realization numbers similar to those of the randomly generated barcodes.


\begin{center}
\begin{figure}[H]
    \centering
    \includegraphics[scale=0.5]{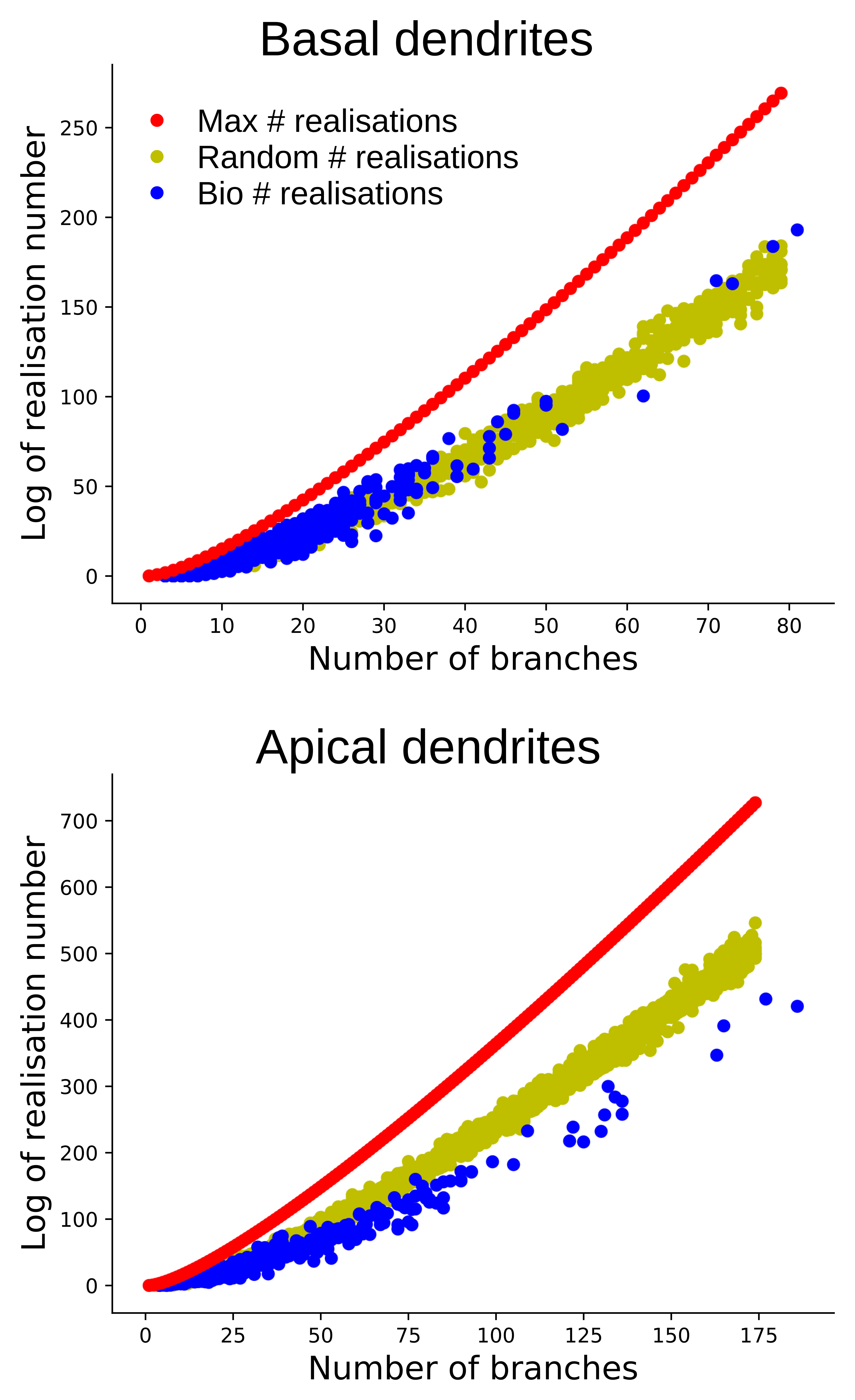}
    \caption{The log of the tree-realization number for barcodes with varying numbers of bars. (A) The log of  tree-realization number for barcodes of basal dendrites (in blue) in comparison with random barcodes (in yellow) and the maximum tree-realization number ($n!$ for $n+1$ bars) (in red). (B) The log of the tree-realization number for barcodes of apical dendrites (in blue) in comparison with random barcodes (in yellow) and the maximum maximum tree-realization number (in red).}
    \label{comp_rand_bio}
\end{figure}
\end{center}

\section{Discussion}


In this paper we presented and analyzed two algorithms that are relevant in topological data analysis: the TMD, which encodes the structure of a geometric tree in a barcode, and the TNS, which generates a geometric tree from a barcode. We proved that for a  good choice of parameter, the TNS is robust with respect to small perturbations of barcodes; an analogous stability result for the TMD was established in \cite{tmd}. 

We observed that ordering  the bars in the persistence barcode according to birth times results in the natural association of a permutation to the barcode, based on death times, giving rise to a meaningful equivalence relation on the set of barcodes. We also introduced a natural, combinatorial equivalence relation on geometric trees. For any barcode, we analyzed the set of combinatorial equivalence classes of those geometric trees whose TMD corresponds that barcode, providing a simple, explicit formula for its cardinality in terms of the permutation associated to the barcode. Cayley graphs of symmetric groups provide a useful visualization of how this cardinality varies as bars in the barcode are transposed.

We illustrated our theoretical results computationally. In addition, we computed the probability for the TNS to generate different combinatorial tree types from a fixed barcode and found it to be a function of the parameter $\lambda$ on which the TNS depends, a result which can be explained only by the stochastic nature of the TNS algorithm. The stochastic nature of the TNS algorithm also leads to variation in the equivalence classes of barcodes associated by the TMD to the trees generated from a fixed barcode by the TNS. In particular, when starting with the TMD of a ``biological'' tree (i.e., arising from a digital neuron reconstruction) including bars with similar birth or death times, we observed an oscillatory behavior between two (or more) different classes states, increasing the variance of the generated trees.

We also initiated an analysis of the distinctive features of biological trees compared to random trees. We discovered that the barcodes associated by the TMD to trees representing neuronal morphologies represent a small fraction of possible equivalence classes of barcodes. It follows that the set of combinatorial types of geometric trees that are biologically realized is also constrained, indicating a biological preference for specific tree structures. There is much yet to discover about which geometric or combinatorial features distinguish biological trees among all geometric trees and why.

In future work  we intend to further investigate the effect of different types of noise on the TNS algorithm. For instance, we have considered only the effect of transposing two bars, but other types of changes are certainly also relevant, such as investigating the effects of switching both births and deaths, as mentioned in Remark \ref{stochastic_several_changes}. On a more neuroscientific note, we intend to continue exploring the distinguishing characteristics of biological trees, with the goal of explaining the structural and functional reasons for the observed geometric and combinatorial constraints.

On the mathematical side, we are currently analyzing the structure on the space of barcodes revealed by the symmetric groups  and determining what information can be extracted from the induced stratification of this space. This structure on the space of barcodes should also provide significant insights into the still somewhat mysterious space of geometric trees, which is of considerable interest to a wide range of mathematicians.


\vspace{6pt} 



\section*{Author contributions}

Conceptualization by L.K., experiments performed by L.K. and A.G. Formal analysis and theorem formulation by L.K., A.G and K.H. Supervision by K.H. All authors wrote that paper and agreed to the published version of the manuscript.

\section*{Funding}

This study was supported by funding to the Blue Brain Project, a research center of the École polytechnique fédérale de Lausanne (EPFL), from the Swiss government’s ETH Board of the Swiss Federal Institutes of Technology. AG and KH gratefully acknowledge the support of Swiss National Science Foundation, grant number CRSII5\_177237.

\section*{Acknowledgments}

The authors would like to thank Justin Curry, Jordan De Sha, and Brendan Mallery for fruitful discussions. We also thank LNMC lab for the neuronal reconstructions used for this analysis, as part of the BBP neuron morphologies dataset.

\section*{Conflicts of interest}

The authors declare no conflict of interest.


\bibliographystyle{plain}
\bibliography{biblio}

\end{document}